\definecolor{leaf}{rgb}{0,.35,0}
\definecolor{chianti}{rgb}{0.6,0,0}
\definecolor{meretale}{rgb}{0,0,.6}
\numberwithin{equation}{subsection}
\newcommand{\RHom}{\operatorname{\mathbf{R}Hom}}
\newcommand{\Hom}{\operatorname{Hom}}
\newcommand{\ldt}{\otimes^\mathbf{L}}
\newcommand{\depth}{\operatorname{depth}}
\newcommand{\sD}{\mathsf{D}}
\newcommand{\rb}{\mathsf{b}}
\newcommand{\rf}{\mathsf{f}}
\newcommand{\Dbf}{\sD_\rf^\rb}
\newcommand{\mx}{\mathfrak{m}}
\newcommand{\px}{\mathfrak{p}}
\newcommand{\Spec}{\operatorname{Spec}}
\newcommand{\injdim}{\operatorname{id}}
\newcommand{\projdim}{\operatorname{pd}}
\newcommand{\Ext}{\operatorname{Ext}}
\newcommand{\Tor}{\operatorname{Tor}}
\newcommand{\amp}{\operatorname{amp}}
\newcommand{\HH}{\textup{H}}
\newcommand{\HR}{\HH^0(R)}
\newcommand{\HS}{\HH^0(S)}
\newcommand{\HD}{\HH^0(D)}
\newcommand{\HE}{\HH^0(E)}
\newcommand{\Hc}[1][\mx]{\operatorname{H}_{#1}}
\newcommand{\dTorsion}[1][\mx]{\operatorname{\mathbf{R}\Gamma}_{#1}}
\newcommand{\dComplete}[1][\mx]{\operatorname{\mathbf{L}\Lambda}^{#1}}
\newcommand{\tel}{\operatorname{Tel}}
\newcommand{\Supp}{\operatorname{Supp}}
\newcommand{\Rhat}{\widehat{R}}
\newtheorem{defn}{Definition}[section]
\newtheorem{theorem}{Theorem}[section]
\newtheorem*{varthmA}{Theorem A}
\newtheorem*{varthmB}{Theorem B}
\newtheorem{lemma}[theorem]{Lemma}
\title{Maximal Cohen-Macaulay DG-Complexes}
\author{Zachary Nason}
\address{Department of Mathematics, University of Nebraska, Lincoln, NE 68588-0130, USA}
\email{znason2@huskers.unl.edu}
\thanks{Part of this article was written while the author was supported by the ``RTG: Commutative Algebra at Nebraska'' grant (NSF grant 2342256).}
\begin{document}	
\begin{abstract}
Let $R$ be a commutative noetherian local differential graded (DG) ring. In this paper we propose a definition of a maximal Cohen-Macaulay DG-complex over $R$ that naturally generalizes a maximal Cohen-Macaulay complex over a noetherian local ring, as studied by Iyengar, Ma, Schwede, and Walker. Our proposed definition extends the work of Shaul on Cohen-Macaulay DG-rings and DG-modules, as any maximal Cohen-Macaulay DG-module is a maximal Cohen-Macaulay DG-complex. After proving necessary lemmas in derived commutative algebra, we establish the existence of a maximal Cohen-Macaulay DG-complex for every DG-ring with constant amplitude that admits a dualizing DG-module. We then use the existence of these DG-complexes to establish a derived Improved New Intersection Theorem for all DG-rings with constant amplitude.
\end{abstract}

\maketitle

\section{Introduction}

In this article, all rings are unital and commutative. The fundamental objects of study in commutative algebra are rings, and modules defined over rings. In derived (or higher) commutative algebra, a ring is generalized to a differential graded ring (DG-ring), which is a differential graded algebra concentrated in non-positive cohomological degrees. The modules of commutative algebra are correspondingly replaced with DG-modules over DG-rings. DG-rings have been studied for several decades, with papers by Avramov and Foxby \cite{LocalG}, and F\'{e}lix, Halperin, and Thomas \cite{GorSpace} using DG-rings to solve problems in commutative algebra and topology, respectively. In the early 2000s, work by authors such as Frankild, Iyengar, and J\o rgensen \cite{DDGA,GDGA} proved results involving DG-rings, particularly by developing a suitable definition of a Gorenstein DG-ring and by proving that the Koszul complex of a sequence of elements in a Gorenstein local ring is a Gorenstein DG-ring. Working from a different perspective, T\"{o}en and Vezzosi \cite{HAG} incorporated DG-rings in their construction of homotopical algebraic geometry. However, Yekutieli was the first to develop a formal theory of derived commutative algebra as a subject in its own right. As a result of Yekutieli's initial work, the study of derived commutative algebra has seen a rise in interest over the past few years. In particular, Shaul has recently developed a theory of generalized injective DG-modules which naturally expands the Bass structure theorem in commutative algebra, and has developed a suitable definition of a Cohen-Macaulay DG-ring and a Cohen-Macaulay DG-module.

In \cite{CMDGA}, the definition of a Cohen-Macaulay DG-module has an important limitation as noted in section 6. In order for $M \in \Dbf(R)$ to be a Cohen-Macaulay DG-module, it must have the same cohomological amplitude as its underlying DG-ring $R$. This limitation makes Shaul's definition of a Cohen-Macaulay DG-module a natural generalization of a Cohen-Macaulay module over a noetherian local ring. However, as Shaul asks in \cite{CMDGA}, is it possible to define a Cohen-Macaulay DG-module in a way that generalizes a Cohen-Macaulay \emph{complex} over a noetherian local ring? (For reference, a Cohen-Macaulay complex $M$ over a noetherian local ring is a complex such that the cohomological amplitude of $\dTorsion(M)$ is zero.)

To help answer this question, we have taken inspiration from a 2018 paper by Iyengar, Ma, Schwede, and Walker \cite{MCMC}. In this paper, the authors defined a maximal Cohen-Macaulay complex $M$ over a noetherian local ring $R$ in terms of the local cohomology of $M$ and the existence of a nonzero map between $\HH^0(M) \to \HH^0(k \ldt_R M)$. The authors then prove that such complexes exist over rings with dualizing complexes, and that their existence implies the Improved New Intersection Theorem. Since the authors used derived category techniques to prove their theorems, it is possible to generalize many definitions and techniques from commutative algebra to derived commutative algebra. In particular, we are able to formulate a definition of a maximal Cohen-Macaulay DG-complex that allows the cohomological amplitude of the DG-complex to exceed the amplitude of the DG-ring. From this definition, we then show that their existence implies a derived version of the Improved New Intersection Theorem for a large class of DG-rings (DG-rings whose cohomological amplitude remains invariant under localization - referred to in the literature as having constant amplitude.)

The natural next step is to then show that maximal Cohen-Macaulay DG-complexes exist over noetherian local DG-rings. By using a version of the Canonical Element Theorem proved in \cite{MCMC}, we are able to show that maximal Cohen-Macaulay DG-complex exist for every DG-ring with constant amplitude that admits a dualizing DG-module.

\begin{varthmA}
Let $R$ be a noetherian local DG-ring with constant amplitude and $\amp(R) = n$. If $D$ is a dualizing DG-module with $\inf(D) = 0$, then the DG-module $\RHom_R(\Sigma^n(D^{\leq n}), D)$ is a maximal Cohen-Macaulay DG-complex.
\end{varthmA}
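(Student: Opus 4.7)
The plan is to verify that $M := \RHom_R(\Sigma^n(D^{\leq n}), D)$ meets both parts of the definition of a maximal Cohen-Macaulay DG-complex: the amplitude condition on $\dTorsion(M)$, and the existence of a nonzero map $\HH^0(M) \to \HH^0(k \ldt_R M)$. First I would collect the structural facts about the test complex. Since $\inf(D) = 0$, the soft truncation $D^{\leq n}$ has cohomology concentrated in degrees $[0,n]$, so $\Sigma^n(D^{\leq n})$ is a bounded DG-module with cohomology in degrees $[-n,0]$, matching the range of $R$. Because $D$ is dualizing, every DG-module with bounded, finitely generated cohomology is $D$-reflexive; applying this to both $\Sigma^n(D^{\leq n})$ and $M$ yields the pair of identities $\RHom_R(M,D) \simeq \Sigma^n(D^{\leq n})$ and $\RHom_R(\Sigma^n(D^{\leq n}), D) \simeq M$, which will let me shuttle between the two complexes.

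For the amplitude of $\dTorsion(M)$, I would apply local duality in the DG setting: $\dTorsion(-) \simeq \RHom_R(\RHom_R(-,D), \dTorsion(D))$. Substituting $M$ and invoking the biduality isomorphism above reduces the computation to $\dTorsion(M) \simeq \RHom_R(\Sigma^n(D^{\leq n}), \dTorsion(D))$. Since $\dTorsion(D)$ behaves like a shifted injective hull of the residue field, this $\RHom$ collapses to an expression whose amplitude is governed directly by the amplitude of $\Sigma^n(D^{\leq n})$; the constant amplitude hypothesis $\amp(R) = n$ then pins the outcome down to the desired value, so that $\amp(\dTorsion(M))$ matches that of the underlying DG-ring.

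The nonzero-map condition is the main obstacle, and I would extract it from the DG version of the Canonical Element Theorem that the introduction references. The idea is to translate the map $\HH^0(M) \to \HH^0(k \ldt_R M)$ into a statement about $\Ext$ groups via hom-tensor adjunction and the identity $\RHom_R(M,D) \simeq \Sigma^n(D^{\leq n})$, so that a nonzero class produced by the Canonical Element Theorem for $k$ and $\Sigma^n(D^{\leq n})$ becomes a nonvanishing witness for the required map. The delicate point, and where I expect to spend the most effort, is to verify that the soft truncation at level $n$ does not annihilate the canonical class; this is precisely what the constant amplitude hypothesis is designed to prevent, since without it the passage from $D$ to $D^{\leq n}$ could destroy exactly the cohomological data that encodes the nontriviality.
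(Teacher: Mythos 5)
Your blueprint coincides with the paper's at the top level: use local duality to convert the conditions on $\dTorsion(M)$ into conditions on $M^{\dagger}\simeq \Sigma^{n}(D^{\leq n})$, and derive the nonvanishing of the map in condition (2) from a Canonical Element Theorem. But the step you flag as ``the delicate point'' --- passing from the canonical class attached to $D$ to a nonvanishing statement for the truncation $D^{\leq n}$ --- is exactly the substance of the proof, and your proposed resolution of it is not the right one. Constant amplitude is \emph{not} what protects the canonical class from truncation. In the paper, the class lives in $\Ext^d_R(k,\HH^0(D))$, and the mechanism for transporting it to $D^{\leq n}$ is: by faithful injectivity of $\HH^0(E)$ choose $\alpha\in\Hom_{\HR}(\Hc^d(\HH^0(D)),\HH^0(E))$ with $\alpha\circ\xi^d_{\HH^0(D)}\neq 0$; use local (Matlis) duality to realize $\alpha$ as induced by an actual morphism $f\in\Hom_{\sD(R)}(\HH^0(D),D)$; note that any such morphism factors through $D^{\leq n}$ for degree reasons ($\sup(\HH^0(D))=0$); and chase a commutative square to conclude $\xi^d_{D^{\leq n}}\neq 0$. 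Crucially, the Matlis-duality step requires $\HH^0(R)$ to be complete, so the paper first proves the theorem for complete DG-rings and then descends: it shows $D\ldt_R\dComplete(R)$ is a right-normalized dualizing DG-module for $\dComplete(R)$, identifies $(D\ldt_R\Rhat)^{\leq n}$ with $D^{\leq n}\ldt_R\Rhat$, and uses faithful flatness of $\widehat{\HH^0(R)}$ to pull the nonvanishing back to $R$. None of this reduction appears in your plan, and without it the construction of the lifting map $f$ does not go through.

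Where constant amplitude actually enters is in the \emph{other} conditions: one localizes at the minimal primes of $\HH^0(R)$, where $R_{\px}$ is automatically Cohen-Macaulay, to conclude that $\Supp(\HH^n(D))=\Spec(\HH^0(R))$; hence $\HH^n(D)\neq 0$ and $\dim\HH^n(D)=\dim\HH^0(R)$, which is what gives $\amp(M^{\dagger})=n$, $\sup(M^{\dagger})=0$, and $\sup(\dTorsion(M^{\dagger}))=\dim\HH^0(R)$. Your amplitude computation gestures at this but does not supply the localization argument, and your attribution of the hypothesis to the truncation step inverts its actual role. Finally, note that the Canonical Element Theorem you invoke is only available in the literature for rings; the DG version (that $\Ext^d_R(k,\HH^0(D))\to\Hc^d(\HH^0(D))$ is nonzero over the DG-ring $R$ itself) must be established first, which the paper does by comparing with $\HH^0(R)$ via a minimal semi-free resolution and a direct-summand argument. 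As written, your proposal assumes the two hardest ingredients rather than proving them.
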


Using the existence of maximal Cohen-Macaulay DG-complexes, we are able to conclude that the derived Improved New Intersection Theorem holds for all DG-rings with constant amplitude.

\begin{varthmB}[Derived Improved New Intersection Theorem]
Let $R$ be a noetherian local DG-ring with $\amp(R) = n$ with constant amplitude (i.e., $\Supp(\HH^{\inf(R)}(R)) = \Spec(\HR)$), and $F \in \Dbf(R)$ a bounded semi-free DG-module such that $\HH^0(F) \neq 0$ and $\HH^i(F)$ is of finite length for $i \leq -1$. If an ideal $I$ in $\HR$ annihilates a minimal generator of $\HH^0(F)$, then $\projdim(F)+n \geq \dim \HR - \dim(\HR/I)$. 
\end{varthmB}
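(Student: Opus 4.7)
My plan is to deduce Theorem~B from Theorem~A, adapting to the DG setting the Koszul/canonical-element argument by which Iyengar--Ma--Schwede--Walker derive the classical Improved New Intersection Theorem from the existence of maximal Cohen-Macaulay complexes, being careful to bookkeep the amplitude $n$ of the DG-ring into the final inequality.

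First I would reduce to the case where Theorem~A applies, i.e., where $R$ admits a dualizing DG-module. The natural reduction is to pass to the derived $\mx$-adic completion $\Rhat$; one verifies that constant amplitude, $\amp(R)$, $\dim \HR$, $\dim(\HR/I)$, and $\projdim(F)$ are all preserved, that $F \ldt_R \Rhat$ still has finite-length cohomologies in negative degrees, and that the minimal generator of $\HH^0(F)$ annihilated by $I$ descends to one annihilated by $I \cdot \HH^0(\Rhat)$. These invariance statements should follow from the derived commutative algebra preliminaries assembled earlier in the paper. Since $\Rhat$ automatically admits a dualizing DG-module, Theorem~A produces a maximal Cohen-Macaulay DG-complex $M$ over $\Rhat$.

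Set $s = \dim \HR - \dim(\HR/I)$. Because the height of $I$ in $\HR$ is at least $s$, I can choose elements $x_1, \ldots, x_s \in I$ forming the initial segment of a system of parameters of $\HR$, lift them to $R$, and form the Koszul DG-module $K = K_R(x_1, \ldots, x_s)$. I would then analyze $F \ldt_R K \ldt_R M$ by a double amplitude estimate. \textbf{Upper bound:} since $F$ admits a semi-free representative of amplitude $\projdim(F)$ and $\dTorsion M$ has amplitude $n$ by the MCM hypothesis, a standard spectral sequence computation bounds how far down the (local) cohomology of $F \ldt_R K \ldt_R M$ can be nonzero in terms of $\projdim(F) + n$. \textbf{Lower bound:} the defining ``canonical element'' property of the maximal Cohen-Macaulay DG-complex $M$ (the nonzero map $\HH^0(M) \to \HH^0(k \ldt_R M)$), combined with the map $\HR/I \to \HH^0(F)$ supplied by the minimal generator, and the Koszul factor providing the ``top socle direction'' on the $x_i$, should produce a nonzero cohomology class in degree $-s$. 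Comparing the two bounds yields $s \le \projdim(F) + n$, which is precisely the required inequality.

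The principal obstacle is the lower bound: transporting Hochster's Canonical Element Theorem into the DG setting with enough precision that the nonzero class really sits in degree exactly $-s$, and so that the amplitude $n$ of $R$ is absorbed cleanly rather than appearing as a spurious error term. The other ingredients --- completion invariance, parameter extraction, and the routine amplitude bound on $F \ldt_R K \ldt_R M$ --- should be comparatively straightforward given the derived commutative algebra lemmas established earlier in the paper.
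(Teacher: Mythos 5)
Your reduction to the derived completion $\Rhat$ and the appeal to Theorem A for a maximal Cohen-Macaulay DG-complex over $\Rhat$ match the paper's first step (and the invariance claims you list are indeed what the paper verifies). The gap is in everything after that: the implication ``existence of a DG-module of maximal depth $\Rightarrow$ derived INIT'' is the real content of the theorem, and your proposal does not prove it. Both halves of your ``double amplitude estimate'' on $F \ldt_R K \ldt_R M$ are left unexecuted. You explicitly flag the lower bound (a nonzero class in degree exactly $-s$) as an unresolved obstacle, and the upper bound is not obviously correct either: with $\depth(M) = \dim \HR = d$ and $\amp(\dTorsion(M)) = n$, the object $\dTorsion(F \ldt_R K \ldt_R M) \cong F \ldt_R K \ldt_R \dTorsion(M)$ a priori has cohomology anywhere in degrees roughly $[-d - s - \projdim(F),\, -d+n]$, and it is not clear how a vanishing bound phrased in terms of $\projdim(F) + n$ combines with a class in degree $-s$ to give $s \leq \projdim(F) + n$. (Note also that the maximal-depth condition in the DG setting concerns the map $\HH^n(M) \to \HH^n(k \ldt_R M)$, not $\HH^0$; this is precisely the amplitude bookkeeping you say you intend to track.)

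The paper's route is different, and the difference matters. Theorem \ref{INIT} derives the inequality from a maximal-depth $M$ by setting $s = \inf(F \otimes_R M)$, localizing at a minimal prime $\px$ of $\HH^s(F \otimes_R M)$, and applying the derived Auslander--Buchsbaum formula. When $s < n$ one shows $\px = \mx$ by contradiction, and it is exactly there that constant amplitude is used (to guarantee $\inf(R_\px) = -n$, so that $\depth_{R_\px}(R_\px) \leq \dim \HR_\px - n$). When $s = n$, the minimal generator annihilated by $I$ gives $\Gamma_I \HH^n(F \otimes_R M) \neq 0$, hence $\depth(I, F \otimes_R M) = n$, and the Koszul estimate $\depth(F \otimes_R M) \leq \depth(I, F \otimes_R M) + \dim(\HR/I)$ (via Lemma \ref{KoszulInf} and derived completeness of $M$) closes the argument. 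Your sketch has no analogue of this case analysis and uses constant amplitude only through Theorem A; since the paper needs the hypothesis essentially in the localization step, you would have to either carry out a genuinely localization-free argument in full or explain where constant amplitude re-enters. As written, the central step of the proof is missing.
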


As demonstrated in this article, the proofs of both Theorem A and B crucially use the assumption of constant amplitude, and we do not see a way at present to lift this assumption.

\section{Background}

\subsection{DG-Rings}

In this section, we will review a number of fundamental definitions and results in derived commutative algebra. More detailed references can be found in Shaul's work \cite{CMDGA,DerCom, Injective} and in Yekutieli's book \cite{DC}. In addition, Christensen, Foxby, and Holm's textbook \cite{DCMCA} is an excellent and thorough source on derived category methods in (standard) commutative algebra, which often can be applied in derived commutative algebra.

Throughout this article, we will denote all grading cohomologically in order to fit in with the majority of current literature. Occasionally, this will conflict with convention regarding certain algebraic objects (like the Koszul complex), but maintaining a uniform cohomological grading will hopefully prevent any ambiguity.

A differential graded algebra (DGA) is a graded algebra
\begin{equation*}
R = \bigoplus_{i \in \mathbb{Z}} R^i
\end{equation*}
equipped with a differential $\delta^i: R^i \to R^{i+1}$ such that $\delta^{i+1}\circ\delta^i = 0$ and such that the differential satisfies the Leibniz rule
\begin{equation*}
\delta(rs) = \delta(r)s + (-1)^{|r|}r\delta(s)
\end{equation*}
If a differential graded algebra satisfies a graded version of commutativity ($rs = (-1)^{|r||s|}sr$), then the DGA is graded-commutative. If in addition $r^2 = 0$ for all elements of odd degree, then the DGA is strictly graded-commutative. Throughout the rest of the article, all DGAs will be strictly graded-commutative and will be referred to as ``commutative'' for simplicity.

The main object of study in this article is a noetherian local differential graded ring (DG-ring). This is a commutative differential graded algebra that generalizes a noetherian local ring.

\begin{defn}
Let $R$ be a commutative differential graded algebra. Then $R$ is a \emph{noetherian local differential graded ring} if the following hold:
\begin{enumerate}
\item $R$ is non-positive ($R^i = 0$ for all $i > 0$)
\item $R$ is cohomologically bounded
\item $\HR$ is a noetherian local ring with maximal ideal $\mx$ and residue field $k$
\item $\HH^i(R)$ is a finitely generated $\HR$-module for all $i \in \mathbb{Z}$ 
\end{enumerate}
\end{defn}

Throughout the rest of the paper, we will simply refer to these objects as ``DG-rings'' for simplicity.

For a DG-ring $R$, a differential graded module (DG-module) is a graded module equipped with a differential compatible with the differential of $R$. The collection of all DG-modules over $R$ and all chain maps between these DG-modules forms a category $\mathsf{DG}(R)$. This category is analogous to the category of chain complexes over a ring. By formally inverting the quasi-isomorphisms in $\mathsf{DG}(R)$, we obtain the derived category of DG-modules $\sD(R)$. This is a triangulated category with respect to suspension $\Sigma$. The objects of $\sD(R)$ with bounded cohomology form a full subcategory of $\sD(R)$, which we will denote as $\sD^\rb(R)$. Similarly, the objects of $\sD(R)$ with finitely generated cohomology form a full subcategory of $\sD(R)$, which we will denote as $\sD_\rf(R)$. The intersection of these two subcategories, denoted $\Dbf(R)$, is itself a full subcategory of $\sD(R)$, and consists of all DG-modules with bounded and finitely generated cohomology. For a DG-module $M \in \Dbf(R)$ (or more generally in $\sD^\rb(R)$), the largest and smallest nonzero cohomologies are finite, and are denoted $\sup(M)$ and $\inf(M)$ respectively. The quantity $\sup(M)-\inf(M)$ is also finite, and is called the amplitude of $M$ ($\amp(M)$).

For every DG-module $M \in \sD(R)$, the smart truncations $M^{\leq n}$ and $M^{>n}$ are themselves objects in $\sD(R)$ such that 

\begin{equation*}
\HH^i(M^{\leq n}) = \begin{cases}&\HH^i(M) \text{ if } i \leq n \\ &0 \text{ otherwise}
\end{cases}
\end{equation*}
and similarly
\begin{equation*}
	\HH^i(M^{> n}) = \begin{cases}&\HH^i(M) \text{ if } i > n \\ &0 \text{ otherwise}
	\end{cases}
\end{equation*}

As first described in \cite{Tilting}, it is possible to localize a DG-ring $R$ with respect to a prime ideal $\px$ in $\Spec(\HR)$. The localization $R_\px$ is itself a DG-ring with $\HH^i(R_\px) = \HH^i(R)_\px$ for all $i \in \mathbb{Z}$. This implies that $\amp(R_\px) \leq \amp(R)$ with equality holding if $\px \in \Supp(\HH^{\inf(R)}(R))$. Difficulties often arise when there exists a prime $\px \in \Spec(\HR)$ such that $\amp(R_\px) < \amp(R)$, which will occasionally require us to restrict our attention to DG-rings where this is impossible (i.e., $\Supp(\HH^{\inf(R)}(R)) = \Spec(\HR)$). Such DG-rings are said to have constant amplitude, and were first defined by Shaul in \cite{Koszul}. We will work with DG-rings of constant amplitude later in the paper. 

Localization with respect to $\px$ can be extended to a DG-module $M \in \sD(R)$ by letting $M_\px = M \otimes_R R_\px \in \sD(R_\px)$. As with DG-rings, we have $\HH^i(M_\px) = \HH^i(M)_\px$ for all $i \in \mathbb{Z}$, which in turn implies that $\amp(M_\px) \leq \amp(M)$.
\subsection{Derived Hom and Derived Tensor}

Let $R$ be a DG-ring and $M$ a DG $R$-module. The classical functors $\Hom_R(M, -)$, $\Hom_R(-, M)$, and $- \otimes_R M$  (which operate on objects and morphisms in $\mathsf{DG}(R)$) induce derived functors $\RHom_R(M, -)$, $\RHom_R(-, M)$, and $- \ldt_R M$ which operator on objects and morphisms in $\sD(R)$. These derived functors are computed using semi-projective, semi-injective, and semi-flat replacements, which exist for any DG $R$-module. Explicitly,
\begin{align*}
	\RHom_R(M, N) &= \Hom_R(P, N) \text{ (where $P$ is a semi-projective replacement of $M$)} \\
	\RHom_R(N, M) &= \Hom_R(N, I) \text{ (where $I$ is a semi-injective replacement of $M$)} \\
	M \ldt_R N &= F \otimes_R N \text{ (where $F$ is a semi-flat replacements of $M$)}
\end{align*}
The cohomologies of the derived Hom functor are denoted with $\Ext$, and the cohomologies of the derived tensor functor are denoted by $\Tor$:
\begin{align*}
	\Ext^i_R(M, N) &= \HH^i(\RHom_R(M, N)) \\
	\Tor_i^R(M, N) &= \HH^{-i}(M \ldt_R N)
\end{align*}
When $R$ is a local ring, the above definitions of $\Ext$ and $\Tor$ exactly coincide with the standard definitions of $\Ext$ and $\Tor$ in commutative algebra.

In derived commutative algebra, the projective and injective dimension of a DG-module are defined using Ext and Tor.
\begin{defn}
	Let $R$ be a DG-ring, and $M \in \sD(R)$. The \emph{projective dimension} of $M$ is the number
	\begin{equation*}
		\inf\{n \in \mathbb{Z} \, | \, \Ext^i_R(M, N) = 0 \text{ for any $N \in \sD^\rb(R)$ and any $i > n + \sup{N}$}\}
	\end{equation*}
	and the \emph{injective dimension} of $M$ is the number
	\begin{equation*}
		\inf\{n \in \mathbb{Z} \, | \, \Ext^i_R(N, M) = 0 \text{ for any $N \in \sD^\rb(R)$ and any $i > n - \inf{N}$}\}
	\end{equation*}
\end{defn}
\subsection{Derived Torsion and Derived Completion}

Let $R$ be a DG-ring, and let $I$ be an ideal in $\HR$. In contrast to the derived Hom and derived tensor functors, defining and computing the derived $I$-torsion and derived $I$-completion functors over DG-rings is more involved. The following paragraph gives a brief sketch of how these derived functors are constructed. A formal construction is given in \cite{DerCom}, particularly in section 2.

The collection of all DG-modules whose cohomologies are $I$-torsion forms a triangulated subcategory of $\sD(R)$, denoted as $\sD_{I-\text{tor}}(R)$. The inclusion functor $J: \sD_{I-\text{tor}}(R) \to \sD(R)$ has a right adjoint $R: \sD(R) \to \sD_{I-\text{tor}}(R)$. The derived $I$-torsion functor $\dTorsion[I]$ is then defined to be the composition $J \circ R$. The derived functor $\dTorsion[I]$ itself has a left adjoint which is the derived $I$-completion, denoted $\dComplete[I]$.

Using the properties of weakly proregular sequences and the derived base change property of the telescope complex, one can explicitly  compute the derived $I$-torsion and derived $I$-completion of a DG-module over any DG-ring by using telescope complexes, as shown by Shaul in \cite{DerCom}. In particular, we have that
\begin{align*}
	\dTorsion[I](M)  &\simeq \tel(R^0; J) \otimes_{R^0} M \\
	\dComplete[I](M) &\simeq \Hom_{R^0}(\tel(R;J), M)
\end{align*}
where $J$ is a finitely generated ideal in $R^0$ whose image with respect to the map $R^0 \to \HR$ generates $I$.
The cohomologies of the derived $I$-torsion functor are denoted by $\HH_I^i$, and the cohomologies of the derived $I$-complete functor are denoted by $\HH_i^I$:
\begin{align*}
	\HH_I^i(M) &= \HH^i(\dTorsion[I](M)) \\
	\HH^I_i(M) &= \HH^{-i}(\dComplete[I](M))
\end{align*}
When $R$ is a local ring, the above definitions of $\HH_I^i$ and $\HH^I_i$ exactly coincide with the standard definitions of local cohomology and local homology.

\subsection{Dualizing DG-Modules}

\begin{defn}
\label{Dualizing}
Let $R$ be a DG-ring. A DG-module $D$ is a \emph{dualizing DG-module} if the following three conditions hold:
\begin{enumerate}
	\item $D$ is finite (i.e., $D \in \Dbf(R)$).
	\item The homothety morphism $R \to \RHom_R(D, D)$ is an isomorphism.
	\item $\injdim(D) < \infty$
\end{enumerate}
\end{defn}
If $R$ is a local ring, then a dualizing DG-module is just a dualizing complex over $R$. 

Throughout the rest of the article, we will often construct the dual of a DG-module with respect to a dualizing DG-module. To be concise, we will set
\begin{equation*}
	M^\dagger := \RHom_R(M, D)
\end{equation*}
where $M$ is an arbitrary DG $R$-module, and $D$ is a dualizing DG $R$-module.

For a dualizing DG-module $D$ over a DG-ring $R$, the biduality morphism
\begin{equation*}
	M \to \RHom_R(\RHom_R(M, D), D)
\end{equation*}
is an isomorphism for all $M \in \Dbf(R)$. This fact follows from the definition of a dualizing DG-module and derived Hom evaluation.

If the DG-ring $R$ itself satisfies Definition~\ref{Dualizing}, then $R$ is called a Gorenstein DG-ring. Since the homothety morphism $R \to \RHom_R(R, R)$ is always an isomorphism in this case, the only non-trivial condition is that $R$ has finite injective dimension, which is why the label ``Gorenstein'' makes sense.

It is a well-known fact in commutative algebra that a noetherian local ring has a dualizing complex if and only if it is the homomorphic image of a Gorenstein local ring. In \cite{RecognizingDC}, J\o rgensen proves that Gorenstein DG-rings can take the place of Gorenstein local rings (i.e., a noetherian local ring has a dualizing complex if and only if it is the homomorphic image of a Gorenstein DG-ring). Using J\o rgensen's proof, we can extend this result to arbitrary DG-rings and show that a DG-ring has a dualizing DG-module if and only if it is the homomorphic image of a Gorenstein DG-ring.

If two DG-modules are dualizing DG-modules, then they are isomorphic up to a shift; conversely, any shift of a dualizing DG-module is itself a dualizing DG-module. Because shifting a dualizing DG-module preserves the dualizing property, it is occasionally important to specify the appropriate position of a dualizing DG-module. To this end, authors typically use the phrase ``normalized dualizing DG-module'' (or normalized dualizing complex) to refer to a specific shift. Unfortunately, there are two common shifts that ``normalized'' can mean in the literature. The first shift (used by Hartshorne) is the shift where $\inf(D) = -\dim(\HR)$. The second shift (used by \cite{MCMC}) is the shift where $\inf(D) = 0$. We will use both shifts throughout the rest of the article, and so to avoid confusion, we will call the shift where $\inf(D) = -\dim(\HR)$ ``left-normalized'' and the shift where $\inf(D) = 0$ ``right-normalized''.
\section{The Improved New Intersection Theorem for DG-Rings}

In \cite{MCMC}, Iyengar, Ma, Schwede, and Walker show that the existence of complexes of maximal depth over noetherian local rings implies the Improved New Intersection Theorem. In this section, we will define a DG-module of maximal depth, and will show that the existence of such DG-modules implies a derived analogue of the Improved New Intersection Theorem. We will list a few important definitions and lemmas that we will need for the main proof.

\subsection{Depth and Projective Dimension}

\begin{defn}[Depth and Sequential Depth]
Let $R$ be a noetherian local DG-ring with an ideal $I \in \HR$ , and let $M \in \sD^+(R)$. The I-\emph{depth} of $M$ is 
\begin{equation*}
\depth(I, M) = \inf(\RHom_R(\HR/I, M)) = \inf(\dTorsion[I](M))
\end{equation*}
The \emph{depth} (or $\mx$-depth) of $M$ is
\begin{equation*} \depth(M) = \inf(\RHom_R(k, M))=\inf(\dTorsion(M)).
\end{equation*} 
\end{defn}

The following two proofs are analogues of Remark 2.3 in \cite{MCMC}

\begin{lemma}
\label{DepthBound}
Let $R$ be a DG-ring with an ideal $I \subset \HR$, and let $M \in \sD^+(R)$. Then $\depth(I, M) \geq \inf(M)$, and equality holds if $\Gamma_I(\HH^{\inf(M)}(M)) \neq 0$.
\end{lemma}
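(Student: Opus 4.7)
Plan: Set $m := \inf(M)$. The claim says that the lowest nonzero cohomological degree of $\dTorsion[I](M)$ is at least $m$, with equality when $\Gamma_I(\HH^m(M)) \neq 0$. To prove this I would use the explicit telescope model $\dTorsion[I](M) \simeq \tel(R;I) \otimes_{R^0} F$ (with $F$ a semi-free resolution of $M$) recalled in Section~2. Since $\tel(R;I)$ is a bounded complex of flat $R^0$-modules concentrated in cohomological degrees $\geq 0$, filtering the total complex of this bicomplex by telescope degree yields a convergent spectral sequence of $\HR$-modules
\begin{equation*}
E_2^{p,q} \;=\; \HH_I^p\bigl(\HH^q(M)\bigr) \;\Longrightarrow\; \HH^{p+q}\bigl(\dTorsion[I](M)\bigr).
\end{equation*}

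For the inequality, note that $E_2^{p,q} = 0$ whenever $p < 0$ (classical local cohomology vanishes there) or $q < m$ (by the choice of $m$), so every entry with $p + q < m$ vanishes on the $E_2$-page, hence on every later page, hence on $E_\infty$. Convergence then forces $\HH^i(\dTorsion[I](M)) = 0$ for all $i < m$, which is exactly $\depth(I, M) \geq \inf(M)$.

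For the equality, the antidiagonal $p + q = m$ has only the single potentially nonzero entry $E_2^{0, m} = \HH_I^0(\HH^m(M)) = \Gamma_I(\HH^m(M))$. Every differential $d_r$ into or out of this spot has source with $p = -r < 0$ or target with $q = m - r + 1 < m$, landing in the vanishing region identified above, so the entry survives unchanged to $E_\infty$. Consequently $\HH^m(\dTorsion[I](M)) \cong \Gamma_I(\HH^m(M))$, and under the stated hypothesis this is nonzero, so the inequality upgrades to an equality.

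The main obstacle is constructing this spectral sequence rigorously over a DG-ring: once the telescope model is in hand, the flatness of its terms over $R^0$ ensures that the $E_2$-page genuinely computes ordinary local cohomology of the graded $\HR$-modules $\HH^q(M)$ rather than some derived variant, and the bounded amplitude of $\tel(R;I)$ gives convergence. A purely triangulated alternative would be to combine the truncation triangle $M^{\leq m} \to M \to M^{>m}$ (after applying the exact functor $\dTorsion[I]$) with a reduction to the single-module case $\HH^m(M)[-m]$, but that route still needs the inequality of the first step applied to $M^{>m}$, so the spectral sequence remains the cleanest input.
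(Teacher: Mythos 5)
Your argument is correct, but it is a genuinely different route from the paper's. The paper's proof reduces at once to the classical statement over the noetherian local ring $\HR$: by derived Hom-tensor adjunction, $\RHom_R(\HR/I, M) \cong \RHom_{\HR}(\HR/I, \RHom_R(\HR, M))$, so $\depth_R(I,M) = \depth_{\HR}(I, \RHom_R(\HR,M))$; the identities $\inf(M) = \inf(\RHom_R(\HR,M))$ and $\HH^{\inf(M)}(M) \cong \HH^{\inf(M)}(\RHom_R(\HR,M))$ from \cite[Prop.\ 3.3]{Tilting} then allow one to quote \cite[Remark 2.3]{MCMC} verbatim for the $\HR$-complex $\RHom_R(\HR,M)$. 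You instead use the other characterization of depth, via $\dTorsion[I](M)$, and reprove the ring-case remark from scratch with the hyper-local-cohomology spectral sequence of the telescope model. Both are sound. Your version is self-contained, avoids citing the ring case, and actually yields the slightly stronger conclusion $\HH^{\inf(M)}(\dTorsion[I](M)) \cong \Gamma_I(\HH^{\inf(M)}(M))$; its only delicate point is the one you flag yourself, the identification of the $E_2$-page: since $\HH^q(M)$ is an $\HR$-module, base change gives $\tel(R^0;\mathbf{j}) \otimes_{R^0} \HH^q(M) \cong \tel(\HR;\overline{\mathbf{j}}) \otimes_{\HR} \HH^q(M)$, and over the noetherian ring $\HR$ the generators of $I$ are weakly proregular, so the latter computes ordinary local cohomology and $E_2^{p,q} = \HH^p_I(\HH^q(M))$ as claimed; convergence is automatic because the telescope contributes only finitely many columns. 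The paper's reduction is shorter and dispenses with the spectral sequence entirely, at the cost of relying on the cited properties of the restriction functor $\RHom_R(\HR,-)$.
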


\begin{proof}
(Note that this is a generalization of \cite[Prop. 3.2]{CMDGA}.) By derived Hom-tensor adjunction, we have
\begin{align*}
\RHom_R(\HR/I, M)   &\simeq \RHom_R(\HR/I \ldt_{\HR} \HR, M) \\
                    &\simeq \RHom_{\HR}(\HR/I, \RHom_R(\HR, M))
\end{align*}
Thus, $\depth_R(I, M) = \depth_{\HR}(I, \RHom_R(\HR, M))$. Since
\begin{equation*}\depth_{\HR}(I, \RHom_R(\HR, M)) \geq \inf(\RHom_R(\HR, M))
\end{equation*}
by \cite[Theorem 14.3.16]{DCMCA}, we have 
\begin{equation*}
\depth_R(I, M) \geq \inf(\RHom_R(\HR, M))
\end{equation*}
By \cite[Prop. 3.3]{Injective}, $\inf(M) = \inf(\RHom_R(\HR, M))$, and so $\depth_R(I, M) \geq \inf(M)$.

Now suppose $\Gamma_I(\HH^{\inf(M)}(M)) \neq 0$. By \cite[Prop. 3.3]{Tilting}, we have 
\begin{equation*}
\HH^{\inf(M)}(M) \cong \HH^{\inf(M)}(\RHom_R(\HR, M))
\end{equation*}
and so $\Gamma_I(\HH^{\inf(M)}(\RHom_R(\HR, M)) \neq 0$. By applying \cite[Theorem 14.3.16]{DCMCA} to the $\HR$-complex $\RHom_R(\HR, M)$, we have that 
\begin{equation*}
\depth_{\HR}(I, \RHom_R(\HR, M)) = \inf(M)
\end{equation*}
and so $\depth_R(I, M) = \inf(M)$ by the previous paragraph.
\end{proof}
\begin{theorem}[The Derived Auslander-Buchsbaum Formula]
\label{ABF}
Let $R$ be a DG-ring, and let $M, F \in \Dbf(R)$ with $F$ semi-free and with $\sup(F) = 0$. Then 
\begin{equation*}
\depth(M \ldt_R F) = \depth(M) - \projdim(F)
\end{equation*}
\end{theorem}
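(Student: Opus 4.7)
The plan is to express depth in terms of the infimum of derived local cohomology, commute derived torsion past the tensor product with $F$, and then analyze how this tensor shifts the infimum.

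First, by the definition $\depth(N) = -\inf(\dTorsion(N))$, the claim is equivalent to showing that $\inf(\dTorsion(M \ldt_R F))$ equals $\inf(\dTorsion(M))$ shifted by $\projdim(F)$. The telescope-complex description $\dTorsion(N) \cong \tel(R;\mx) \otimes_{R^0} N$ recalled in the background exhibits derived torsion as an $R^0$-linear tensor functor, and since $F$ is semi-free over $R$ (hence $K$-flat), associativity of the tensor product yields
\begin{equation*}
\dTorsion(M \ldt_R F) \;\cong\; \dTorsion(M) \ldt_R F.
\end{equation*}
With this in hand the problem reduces to controlling the infimum of $\dTorsion(M) \ldt_R F$ in terms of $\inf(\dTorsion(M))$ and $\projdim(F)$.

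For this infimum computation I would pass to a minimal semi-free model of $F$, in which case a DG-analogue of Nakayama's lemma identifies $\projdim(F)$ with the range of degrees occupied by a basis of this model, equivalently with $-\inf(F \ldt_R k)$. Because the cohomologies of $\dTorsion(M)$ are $\mx$-torsion, the computation may be reduced to the residue-field case, either via a d\'evissage along the $\mx$-power filtration on $\dTorsion(M)$ or via a Tor spectral sequence converging to $\HH^\bullet(\dTorsion(M) \ldt_R F)$. In the residue-field case, $k \ldt_R F$ is explicitly a bounded complex of $k$-vector spaces whose infimum differs from that of $k$ by exactly $\projdim(F)$, which propagates to give the desired identity. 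An alternative route, avoiding the intermediate use of $\dTorsion$, is to apply derived tensor evaluation: since $k \in \Dbf(R)$ and $F$ has finite flat dimension, one has $\RHom_R(k, M) \ldt_R F \cong \RHom_R(k, M \ldt_R F)$, which likewise reduces the problem to the same infimum computation.

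The main technical obstacle is the sharp equality in that infimum computation. One direction follows from a general estimate on the infimum of a bounded-below tensor product, but the reverse inequality requires that the bottom cohomology of $\dTorsion(M)$ survive tensoring with the minimal model of $F$ without being annihilated by higher differentials in the spectral sequence. To rule out such cancellation I would combine minimality of the semi-free cover of $F$ with the $\mx$-torsion nature of $\dTorsion(M)$, ensuring that the contribution at the lowest degree of the minimal basis of $F$ is genuinely nonzero on the $E_\infty$ page.
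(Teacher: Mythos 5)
Your reduction steps are fine: commuting $\dTorsion$ past $-\ldt_R F$ via the telescope description is valid, and identifying $\projdim(F)$ with $-\inf(k\ldt_R F)$ for $\sup(F)=0$ is exactly the input the paper takes from the literature. The genuine gap is the step you yourself flag as ``the main technical obstacle'': the sharp equality $\inf\bigl(\dTorsion(M)\ldt_R F\bigr)=\inf\bigl(\dTorsion(M)\bigr)-\projdim(F)$ is never actually proved. The d\'evissage along the $\mx$-power filtration is problematic because the cohomologies of $\dTorsion(M)$ are typically not finitely generated, so that filtration does not terminate; and even on a finite-length subquotient that is not a $k$-vector space, minimality of the semi-free model of $F$ does not by itself force the relevant differentials (or higher spectral-sequence differentials) to vanish --- minimality only guarantees that the differentials become zero after base change to $k$. ``Combine minimality with the $\mx$-torsion nature of $\dTorsion(M)$'' is a statement of intent, not an argument, and it is precisely the point where cancellation has to be excluded.

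The irony is that your ``alternative route'' is the paper's actual proof, but you dismiss it as reducing to ``the same infimum computation.'' It does not. After the tensor-evaluation isomorphism $\RHom_R(k,M)\ldt_R F\cong\RHom_R(k,M\ldt_R F)$, the object $\RHom_R(k,M)$ is a DG $k$-module, so associativity of the tensor product along $R\to k$ gives
\begin{equation*}
\RHom_R(k,M)\ldt_R F\;\cong\;\RHom_R(k,M)\ldt_k\bigl(k\ldt_R F\bigr),
\end{equation*}
and over the field $k$ every complex is formal, so the K\"unneth formula yields the exact additivity
\begin{equation*}
\inf\bigl(\RHom_R(k,M)\ldt_k(k\ldt_R F)\bigr)=\inf\bigl(\RHom_R(k,M)\bigr)+\inf\bigl(k\ldt_R F\bigr)
\end{equation*}
with no d\'evissage, no spectral sequence, and no survival argument. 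Combining this with $\inf(k\ldt_R F)=-\projdim(F)$ finishes the proof. The missing idea in your write-up is exactly this passage to $k$-vector spaces, which is what makes the sharp equality trivial rather than delicate.
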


\begin{proof}
(This proof is essentially the same as \cite[Thm. 2.4]{Amp}. In addition, \cite[Thm 2.16]{Min} proves essentially the same result using a slightly different definition of depth.)

Consider the DG-modules $\RHom_R(k, M)$ and $k \ldt_R F$. These can be regarded as $k$-complexes, so we have the following isomorphism of $k$-complexes
\begin{equation*}
\RHom_R(k, M) \ldt_k (k \ldt_R F) \simeq \RHom_R(k, M \ldt_R F)
\end{equation*}
Thus, we have  
\begin{equation*}
\inf(\RHom_R(k, M)) + \inf(k \ldt_R F) = \inf(\RHom_R(k, M \ldt_R F))
\end{equation*}
Since $\projdim_R(F) < \infty$, we have that
\begin{equation*}
\RHom_R(k, M) \ldt_R F \simeq \RHom_R(k, M \ldt_R F)
\end{equation*}
by \cite[Prop. 2.2]{HomDim}. This implies that 
\begin{align*}
\inf(\RHom_R(k, M) \ldt_R F) 	&= \inf(\RHom_R(k, M \ldt_R F)) \\
								&= \inf(\RHom_R(k, M)) + \inf(k \ldt_R F)
\end{align*}
By the definition of depth, this gives $\depth(M \ldt_R F) = \depth(M) + \inf(k \ldt_R F)$. By \cite[Cor. 2.3]{Min}, we have that $\inf(k \ldt_R F) = -\projdim_R(F) + \sup(F) = -\projdim_R(F)$, and so we obtain
\begin{equation*}
\depth(M \ldt_R F) = \depth(M) - \projdim(F)
\end{equation*}
\end{proof}
\subsection{Derived Completion}

An important property of derived completion is a version of Nakayama's Lemma for the cohomologies of derived complete DG-modules.

\begin{lemma}[Nakayama's Lemma for Derived Complete DG-Modules]
\label{NAK}
Let $R$ be a noetherian local DG-ring, and let $I$ be an ideal in $\HR$. Suppose that $M \in \mathsf{D}(R)$ is $I$-derived complete (i.e., $M \simeq \dComplete[I](M)$). If $I\HH^i(M) = \HH^i(M)$, then $\HH^i(M) = 0$.
\end{lemma}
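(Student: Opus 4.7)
My strategy is to use the explicit telescope description of derived completion to extract, via a Milnor inverse-limit short exact sequence, the vanishing of certain $f$-divisible inverse limits of the cohomology modules; combined with the hypothesis $I\HH^i(M)=\HH^i(M)$, this will force $\HH^i(M)=0$.

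First I would reduce to a one-element situation. Since $\HR$ is noetherian, write $I=(f_1,\ldots,f_n)$. The factorization $\tel(R;I)\simeq \tel(R;f_1)\otimes_R\cdots\otimes_R\tel(R;f_n)$, combined with the formula $\dComplete[I](M)\cong\Hom_{R^0}(\tel(R;I),M)$, implies that $I$-derived completeness of $M$ entails $(f_j)$-derived completeness of $M$ for every generator $f_j$. For each individual $f\in I$, being $(f)$-derived complete amounts to $\RHom_R(R[1/f],M)=0$. Writing $R[1/f]$ as the homotopy colimit of $R\xrightarrow{f}R\xrightarrow{f}\cdots$ and passing to cohomology yields the Milnor short exact sequence
\begin{equation*}
0\to {\lim_k}^{\,1}\HH^{i-1}(M)\to \HH^i(\RHom_R(R[1/f],M))\to \lim_k \HH^i(M)\to 0
\end{equation*}
in which the inverse systems have transition maps given by multiplication by $f$. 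The vanishing of the middle term forces $\lim_k \HH^i(M)=0$ for each $f\in I$.

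I would then conclude by induction on the number of generators of $I$. For the base case $I=(f)$, the hypothesis $f\HH^i(M)=\HH^i(M)$ means $f$ acts surjectively on $\HH^i(M)$; for any nonzero $x\in\HH^i(M)$, iterative $f$-preimages produce a nonzero $f$-divisible tower $(x,x_1,x_2,\ldots)$ with $fx_{k+1}=x_k$, i.e., a nonzero element of $\lim_k \HH^i(M)=0$, a contradiction. The inductive step from $J=(f_1,\ldots,f_{n-1})$ to $I=J+(f_n)$ is the main obstacle I anticipate, since $I\HH^i(M)=\HH^i(M)$ does not yield surjectivity of any single generator. To address this, I would form the cone $M':=\mathrm{cone}(M\xrightarrow{f_n}M)$, observe that it inherits derived completeness from $M$, and analyze the resulting long exact sequence to transfer the divisibility hypothesis to $\HH^j(M')$ with respect to the smaller ideal $J$; the delicate point is that the cohomology of the cone mixes a quotient $\HH^j(M)/f_n\HH^j(M)$ with a kernel $\ker(f_n\mid\HH^{j+1}(M))$, so the bookkeeping must be done carefully to enable a clean application of the inductive hypothesis before finishing with the base case for $f_n$.
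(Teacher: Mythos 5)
Your reduction to single generators, the Milnor-sequence computation, and the base case are all sound, but the inductive step is a genuine gap rather than deferred bookkeeping: as organized, it cannot close. The hypothesis $I\HH^i(M)=\HH^i(M)$ concerns one cohomological degree only, whereas $\HH^i(M')$ for $M'=\mathrm{cone}(M\xrightarrow{f_n}M)$ sits in a short exact sequence with $\HH^i(M)/f_n\HH^i(M)$ as sub and $\ker\bigl(f_n\mid\HH^{i+1}(M)\bigr)$ as quotient. Nothing whatsoever is assumed about $\HH^{i+1}(M)$, so you cannot verify $J\HH^i(M')=\HH^i(M')$, and the induction on the number of generators never launches at the level of the complex $M$. (A smaller point: the generators of $I$ live in $\HR$, so they must be lifted to $R^0$ before forming telescopes or localizations; the whole argument should run over $R^0$ by restriction of scalars.)

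The repair is to move the induction from the complex to the module, and your own Milnor sequence already contains the needed input: since the middle term vanishes in \emph{every} degree, both $\lim_k(\HH^i(M),\cdot f)$ and ${\lim_k}^1(\HH^i(M),\cdot f)$ vanish for each generator $f$, and because $R^0[1/f]$ is resolved by the two-term telescope these are exactly $\Hom_{R^0}(R^0[1/f],\HH^i(M))$ and $\Ext^1_{R^0}(R^0[1/f],\HH^i(M))$, with higher Ext vanishing automatically. Hence the single module $N=\HH^i(M)$ is itself derived complete over $R^0$ with respect to a lift of $I$. Now induct on generators for $N$ alone: $N/f_nN$ is the top cohomology of the derived complete two-term complex $\mathrm{cone}(N\xrightarrow{f_n}N)$, hence derived complete by the same Milnor argument; it satisfies $J(N/f_nN)=N/f_nN$, so the inductive hypothesis kills it, giving $N=f_nN$, and your base case finishes. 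This reorganized argument amounts to a self-contained reproof of the two Stacks Project lemmas the paper invokes after restricting scalars to $R^0$ (that cohomology modules of derived complete complexes are derived complete modules, and derived Nakayama for modules), so it is a legitimately more elementary route, but only once the induction is relocated to the module level.
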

\begin{proof}
Since $I$ is a finitely generated ideal in $\HR$, we can choose some finitely generated ideal $J$ in $R^0$ such that the image of $J$ in $\HR$ is $I$. Denote the list of finite generators of $J$ with $\mathbf{j}$. Since $M$ is $I$-derived complete, we have that $M \simeq \Hom_{R^0}(\tel(R^0; \mathbf{j}), M)$ in $\sD(R)$. The telescope complex $\tel(R^0; \mathbf{j})$ is a semi-free replacement (over $R^0$) of the \v{C}ech complex $\check{C}(\mathbf{j})$ by \cite[Lemma 5.7]{HCT}. Thus, the complex $\Hom_{R^0}(\tel(R^0; \mathbf{j}), M)$ in $\sD(R^0)$ is isomorphic to $\RHom_{R^0}(\check{C}(\mathbf{j}), M)$. Thus, $M \simeq \RHom_{R^0}(\check{C}(\mathbf{j}), M)$ in $\sD(R^0)$, which implies that $M$ is $J$-derived complete in $\sD(R^0)$. By Lemma 15.92.1 in \cite[Tag091N]{stacks-project}, a complex is derived complete if and only if each cohomology of the complex is derived complete. Thus, the cohomologies of $M$ are derived complete with respect to $J$. If $I\HH^i(M) \cong \HH^i(M)$, then $J\HH^i(M) \cong \HH^i(M)$ through restriction of scalars, and so $\HH^i(M) = 0$ by Lemma 15.92.7 in \cite[Tag091N]{stacks-project}.
\end{proof}

Using Nakayama's lemma for derived complete DG-modules, we can prove a fact about Koszul DG-modules over DG-rings that is already known for Koszul complexes over rings \cite[Lemma 2.2]{MCMC}\cite[Lemma 14.3.5]{DCMCA}.

\begin{lemma}
\label{KoszulInf}
Let $R$ be a DG-ring and $I$ be an ideal in $\HR$. Suppose that $M$ is a derived $I$-complete DG-module. For any sequence $\mathbf{r} = r_1, \ldots, r_n$ in $I$, we have
\begin{equation*}
\inf(K(\mathbf{r}; M)) \leq \inf(M)
\end{equation*}
\end{lemma}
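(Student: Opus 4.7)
The plan is to proceed by induction on the length $n$ of the sequence $\mathbf{r}$. The case $n = 0$ is trivial, since $K(\emptyset; M) = M$. For the inductive step, set $\mathbf{r}' = r_1, \ldots, r_{n-1}$ and $N = K(\mathbf{r}'; M)$, so that $K(\mathbf{r}; M)$ is the mapping cone of multiplication by $r_n$ on $N$. By the inductive hypothesis $\inf(N) \leq \inf(M)$, so it suffices to establish the one-step bound $\inf(K(r_n; N)) \leq \inf(N)$.

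To run this one-step argument, I first need to know that $N$ is itself derived $I$-complete. This should follow because the essential image of $\dComplete[I]$ is a triangulated subcategory of $\sD(R)$ (it is the image of a left adjoint, hence closed under cones and shifts); since each $K(r_i;-)$ is a cone of multiplication by $r_i$, iterating preserves derived $I$-completeness, and $N$ inherits this property from $M$.

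Now let $s = \inf(N)$; I may assume $N \neq 0$, as otherwise the inequality is vacuous. The distinguished triangle $N \xrightarrow{r_n} N \to K(r_n; N) \to \Sigma N$ produces a long exact sequence in which the cokernel $\HH^s(N)/r_n\HH^s(N)$ embeds as a submodule of $\HH^s(K(r_n;N))$. If this cokernel is nonzero, then $\inf(K(r_n;N)) \leq s$ and the induction closes. Otherwise $r_n\HH^s(N) = \HH^s(N)$, and since $r_n \in I$ this upgrades to $I\HH^s(N) = \HH^s(N)$; applying Lemma \ref{NAK} to the derived $I$-complete DG-module $N$ forces $\HH^s(N) = 0$, contradicting the choice of $s$.

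The main obstacle I expect is justifying the closure of derived $I$-completeness under the Koszul cone construction at each step of the induction; the rest is a standard long-exact-sequence calculation. Once that closure property is pinned down, the combination of the Koszul long exact sequence with the derived Nakayama lemma yields the conclusion essentially immediately.
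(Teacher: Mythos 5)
Your proof is correct and follows essentially the same route as the paper: reduce to a single element, use the long exact sequence of the Koszul triangle, and invoke the derived Nakayama lemma (Lemma~\ref{NAK}) to rule out $r\HH^s = \HH^s$. The only cosmetic difference is in justifying that the intermediate Koszul DG-modules remain derived $I$-complete — you appeal to the triangulated structure of the subcategory of derived complete objects, whereas the paper notes that $\dComplete[I]$ commutes with $K(\mathbf{r}) \ldt_R -$ since the Koszul complex is perfect; both are valid.
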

\begin{proof}
We first show that $K(\mathbf{r}; M) :=K(\mathbf{r}) \ldt_R M$ is derived $I$-complete. Let $J$ be a finitely generated ideal in $R^0$ such that the image of $J$ in $\HR$ is $I$ (as in the previous lemma). Denote the finite list of generators of $J$ with $\mathbf{j}$. We have an isomorphism $M \simeq \RHom_{R^0}(\check{C}(\mathbf{j}), M)$ in $\sD(R)$ since $M$ is derived $I$-complete. The DG $R$-module $\RHom_{R^0}(\check{C}(\mathbf{j}), M)$ is isomorphic to $\RHom_R(\check{C}(\mathbf(j) \ldt_{R^0} R, M))$ through derived Hom-tensor adjunction. Thus, we have the isomorphism
\begin{equation*}
M \simeq \RHom_R(\check{C}(\mathbf{j}) \ldt_{R^0} R, M) 
\end{equation*}
Applying the functor $-\ldt_R K(\mathbf{r})$ to the above isomorphism yields
\begin{align*}
K(\mathbf{r}; M)	&\simeq \RHom_R(\check{C}(\mathbf{j}) \ldt_{R^0} R, M) \ldt_R K(\mathbf{r}) \\
					&\simeq \RHom_R(\check{C}(\mathbf{j}) \ldt_{R^0} R, M\ldt_R K(\mathbf{r})) \\
					&\simeq \RHom_{R^0}(\check{C}(\mathbf{j}), M \ldt_R K(\mathbf{r}))\\
					&\simeq \dComplete[I](K(\mathbf{r}); M) 
\end{align*}
where the second isomorphism follows from derived tensor evaluation \cite[Lemma 2.7]{FinDim} (noting that $K(\mathbf{r})$ has finite projective dimension over $R$), and the third isomorphism follows from derived Hom-tensor adjunction. Thus, $K(\mathbf{r};M)$ is derived $I$-complete.

Since the Koszul DG-module over a sequence of elements is iteratively constructed, it suffices to show the result on the Koszul DG-module over a single element $r \in I$. By Definition 2.4 in \cite{Koszul}, the Koszul DG-module $K(r;M)$ fits in the distinguished triangle
\begin{center}
\begin{tikzcd}[column sep = small]
M \arrow[r, "\cdot r"] & M \arrow[r] & K(r; M)  \arrow[r] & \phantom{}
\end{tikzcd}
\end{center}
This gives us a long exact sequence in cohomology
\begin{center}
\begin{tikzcd}[column sep = small]
\cdots \arrow[r] & \HH^i(M) \arrow[r, "\cdot r"] & \HH^i(M) \arrow[r] & \HH^i(K(r; M)) \arrow[r] & \HH^{i+1}(M) \arrow[r] & \cdots
\end{tikzcd}
\end{center}

If $\HH^i(K(r; M)) = 0$, then $\HH^i(M) \cong r\HH^i(M)$ from the long exact sequence, which forces $\HH^i(M) = 0$ by Lemma~\ref{NAK}. This proves the result.
\end{proof}

\subsection{DG-Modules of Maximal Depth}

We now define a derived analogue of complexes of maximal depth (as defined in \cite{MCMC}).
\begin{defn}[DG-Modules of Maximal Depth]
Let $R$ be a DG-ring with $\amp(R) = n$, and let $M \in \sD^{\rb}(R)$. The DG-module $M$ has maximal depth if
\begin{enumerate}
\item The map of $\HR$-modules $\HH^n(M) \to \HH^n(k \ldt_R M)$ is nonzero
\item $\depth(M) = \dim(\HR)$
\end{enumerate}
\end{defn}

\begin{lemma}
\label{split}
Let $R$ be a DG-ring, and let $M$ be a DG-module of maximal depth. For any $DG$-module $F$ with $\sup(F) = 0$, if $\HH^0(F) \otimes_R k$ is nonzero, then so is $\HH^n(F \ldt_R M)$.
\end{lemma}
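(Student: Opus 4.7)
The plan is to exhibit a morphism $R \to F$ in $\sD(R)$ whose composition with the natural maps $F \to \HH^0(F) \to k$ equals the canonical augmentation $R \to k$, and then tensor this composition with $M$ so that the nonzero map $\HH^n(M) \to \HH^n(k \ldt_R M)$ given by the maximal depth hypothesis factors through $\HH^n(F \ldt_R M)$.

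Set $L = \HH^0(F)$. Since $L/\mx L = \HH^0(F) \otimes_R k \neq 0$ is a nonzero $k$-vector space, there exists a $k$-linear surjection $L/\mx L \to k$, and composing with the quotient $L \to L/\mx L$ yields an $\HR$-linear surjection $\pi \colon L \to k$. Choose $x \in L$ with $\pi(x) = 1$; under the identification $\Hom_{\sD(R)}(R, L) \cong \HH^0(L) = L$ this element corresponds to a morphism $\phi \colon R \to L$. I would then lift $\phi$ through the smart-truncation triangle
\[
F^{\leq -1} \to F \to L \to F^{\leq -1}[1].
\]
Applying $\Hom_{\sD(R)}(R,-)$ to this triangle gives a long exact sequence in which both $\Hom_{\sD(R)}(R, F^{\leq -1}) = \HH^0(F^{\leq -1})$ and $\Hom_{\sD(R)}(R, F^{\leq -1}[1]) = \HH^1(F^{\leq -1})$ vanish because $F^{\leq -1} \in \sD^{\leq -1}(R)$. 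Hence $\Hom_{\sD(R)}(R, F) \to \Hom_{\sD(R)}(R, L)$ is an isomorphism and $\phi$ admits a unique lift $\widetilde\phi \colon R \to F$. The composition $R \xrightarrow{\widetilde\phi} F \to L \xrightarrow{\pi} k$ and the canonical augmentation $R \to k$ both correspond, under $\Hom_{\sD(R)}(R, k) \cong k$, to $\pi(x) = 1 \in k$, so they agree in $\sD(R)$.

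Finally, apply $- \ldt_R M$ to the composable chain $R \to F \to L \to k$ to obtain a factorization
\[
M \to F \ldt_R M \to L \ldt_R M \to k \ldt_R M
\]
of the natural map $M \to k \ldt_R M$ in $\sD(R)$. Passing to $\HH^n$, the map $\HH^n(M) \to \HH^n(k \ldt_R M)$, which is nonzero by the maximal depth hypothesis, factors through $\HH^n(F \ldt_R M)$, forcing this middle group to be nonzero. The step that demands the most care is the lifting: it is where the assumption $F \in \sD^{\leq 0}(R)$ is used essentially (through $\HH^1(F^{\leq -1}) = 0$), and where one must track the image of the distinguished element $x$ to see that the particular lift really does compose to the canonical $R \to k$ rather than to some other scalar multiple of it.
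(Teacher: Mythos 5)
Your argument is correct, but it is not the route the paper takes. The paper (following Iyengar--Ma--Schwede--Walker) replaces $M$ by a semi-projective DG-module, chooses a cycle $z\in M^n$ whose class survives in $\HH^n(k\otimes_R M)$, builds the map $R\to\Sigma^n M$, $r\mapsto rz$, splits $k\to k\otimes_R\Sigma^n M$ over the field $k$, and then applies $F\ldt_R-$ to that square; the nonvanishing comes from the left vertical arrow $\HH^0(F)\to\HH^0(F\ldt_R k)$ together with the injectivity supplied by the splitting. You instead keep $M$ untouched and insert $F$ on the other side: using the truncation triangle $F^{\leq -1}\to F\to \HH^0(F)\to\Sigma F^{\leq -1}$ and the vanishing of $\HH^0(F^{\leq -1})$ and $\HH^1(F^{\leq -1})$, you produce a morphism $R\to F$ in $\sD(R)$ whose composite $R\to F\to\HH^0(F)\xrightarrow{\pi}k$ is the augmentation (this is where $\HH^0(F)\otimes_{\HR}k\neq 0$ enters, to choose $\pi$), and then $-\ldt_R M$ factors the maximal-depth map $\HH^n(M)\to\HH^n(k\ldt_R M)$ through $\HH^n(F\ldt_R M)$. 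Your version is cleaner for the lemma itself: no semi-projective replacement, no explicit cycle, no splitting. What the paper's version buys is the diagram it labels \eqref{splitDiag}, which is cited again in the case $s=n$ of Theorem~\ref{INIT} to track that the resulting nonzero class in $\HH^n(M\otimes_R F)$ comes from a minimal generator of $\HH^0(F)$ and is therefore annihilated by $I$; if one adopted your proof, that later step would need to be rephrased (your construction does supply the needed element, since $R\to F$ can be chosen to hit a prescribed minimal generator, but the bookkeeping would have to be redone). One cosmetic point: the vanishing $\HH^1(F^{\leq -1})=0$ holds for any $F$ by definition of smart truncation; the hypothesis $F\in\sD^{\leq 0}(R)$ is really used to identify $F^{>-1}$ with the $\HR$-module $\HH^0(F)$ placed in degree $0$, not in the lifting step itself.
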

\begin{proof}
The idea of the proof given in \cite[Lemma 3.1]{MCMC} applies more generally with DG-rings. Let $P \simeq M$ be a semi-projective replacement for $M$. Since $F \otimes_R P \simeq F \ldt_R M$, we have $\HH^n(F \otimes_R P) \cong \HH^n(F \ldt_R M)$, and so it suffices to show that $\HH^n(P \otimes_R k)$ is nonzero. Thus, we can assume that $M$ is semi-projective.   By hypothesis we can find a cycle $z \in M^n$ whose image in $k \otimes_R M$ is not a boundary, and then we can construct a map of DG-modules $R \to \Sigma^n M$ where $r \to rz$. We can construct (in a similar fashion as \cite[Lemma 3.1]{MCMC}) the commutative diagram
\begin{equation}\label{Diag1}
\begin{tikzcd}
R \arrow[r, "\cdot z"]\arrow[d] & \Sigma^n M \arrow[d] \\
k \arrow[r, shift left]\arrow[r, leftarrow, dashed, shift right] & k \otimes_R \Sigma^n M
\end{tikzcd}
\end{equation}
where the dashed line is a left inverse of the arrow above it. To see that the dashed line exists, consider the map $k \to k \otimes_R \Sigma^n M$. There is a nonzero splitting \begin{tikzcd} k \arrow[r, shift left]\arrow[r, leftarrow, dashed, shift right] & \HH(k \otimes_R \Sigma^n M) \end{tikzcd} in $\sD(k)$ since the image of $z$ in $k \otimes_R M$ is not a boundary. Since $\HH(k \otimes_R \Sigma^n M) \simeq k \otimes_R \Sigma^n M$ in $\sD(k)$, we have a splitting \begin{tikzcd} k \arrow[r, shift left]\arrow[r, leftarrow, dashed, shift right] & k \otimes_R \Sigma^n M \end{tikzcd} which lifts to $\sD(R)$ through restriction of scalars. By applying the $F \ldt_R -$ functor and taking the zeroth cohomology to Diagram \ref{Diag1}, we obtain the commutative diagram
\begin{equation} \label{splitDiag}
\begin{tikzcd}
\HH^0(F) \arrow[r]\arrow[d] & \HH^0(F \otimes_R \Sigma^n M) \arrow[d] \\
\HH^0(F \ldt_R k) \arrow[r, shift left] \arrow[r, leftarrow, dashed, shift right] & \HH^0(F \ldt_R (k \otimes_R \Sigma^n M))
\end{tikzcd}
\end{equation}
By hypothesis the left vertical arrow is nonzero, and so its composition with the horizontal arrow is nonzero. Thus, $\HH^0(F \ldt_R \Sigma^n M) \cong \HH^n(F \ldt_R M)$ is nonzero. 
\end{proof}    
The condition of maximal depth is invariant under derived completion.

\begin{lemma}
\label{CompleteMD}
Let $R$ be a DG-ring, and let $M$ be a DG-module of maximal depth. Then $\dComplete(M)$ has maximal depth.
\end{lemma}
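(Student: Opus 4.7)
The plan is to verify both defining conditions of maximal depth for $\dComplete(M)$.

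For condition (2), invoke the MGM-type identity for derived torsion and completion over DG-rings (Greenlees--May duality in Shaul's framework), which furnishes the natural isomorphism
\[
\dTorsion(\dComplete(M)) \simeq \dTorsion(M).
\]
Taking $-\inf$ of both sides immediately yields $\depth(\dComplete(M)) = \depth(M) = \dim(\HR)$.

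For condition (1), the strategy is to exploit the unit $\eta \colon M \to \dComplete(M)$ together with the key isomorphism
\[
k \ldt_R \eta \colon k \ldt_R M \xrightarrow{\sim} k \ldt_R \dComplete(M).
\]
This isomorphism should hold because $k$ is simultaneously $\mx$-derived torsion and $\mx$-derived complete ($\mx$ annihilates $k$), so tensoring with $k$ is insensitive to completion: the fiber of $\eta$ lies in the joint kernel of $\dTorsion$ and $\dComplete$ by MGM, hence is killed by $k \ldt_R -$. Granting this, the naturality square
\begin{center}
\begin{tikzcd}
\HH^n(M) \arrow[r] \arrow[d] & \HH^n(k \ldt_R M) \arrow[d, "\cong"] \\
\HH^n(\dComplete(M)) \arrow[r] & \HH^n(k \ldt_R \dComplete(M))
\end{tikzcd}
\end{center}
has nonzero top arrow (by hypothesis) and right vertical arrow an isomorphism, so the top-right composition is nonzero. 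Commutativity then forces the bottom horizontal arrow to be nonzero, which is condition (1) for $\dComplete(M)$.

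The main obstacle is rigorously establishing the isomorphism $k \ldt_R M \simeq k \ldt_R \dComplete(M)$ in the DG-ring setting. In classical commutative algebra for finite $M$, this is an immediate consequence of $k \otimes_R \hat R \simeq k$, but in the DG setting one must work with the telescope-complex description of $\dComplete$ and show directly that $k \ldt_R -$ annihilates the cofiber of $\eta$. This may require separately verifying that $\dComplete(k) \simeq k$ (clear since $\mx k = 0$) and comparing via the MGM-equivalence between the torsion and complete subcategories.
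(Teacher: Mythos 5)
Your proposal is correct and follows essentially the same route as the paper: Greenlees--May/MGM duality gives $\depth(\dComplete(M))=\depth(M)$, and the isomorphism $k\ldt_R M \cong k\ldt_R \dComplete(M)$ plus the naturality square gives the nonzero map in degree $n$. The "obstacle" you flag is resolved exactly as you sketch it --- the paper writes $\dComplete(M)\ldt_R k \cong \dComplete(M)\ldt_R\dTorsion(k) \cong \dTorsion(\dComplete(M))\ldt_R k \cong \dTorsion(M)\ldt_R k \cong M\ldt_R k$, using $\dTorsion(k)\cong k$, the fact that $\dTorsion$ commutes with $\ldt_R$ (both computed via the telescope complex), and MGM.
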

\begin{proof}
Since the DG $R$-module $k$ is $\mx$-torsion, we have that $\dTorsion(k) \simeq k$ \cite[Prop. 2.9]{DerCom}. Thus, 
\begin{align*}
\RHom_R(k, M) &\simeq \RHom_R(\dTorsion(k), M)\\
                &\simeq \RHom_R(M, \dComplete(M))
\end{align*}
by Greenless-May duality \cite[Thm. 2.12]{DerCom}. Thus, $\depth(\dComplete(M)) = \depth(M)$. To see that $\HH^n(\dComplete(M)) \to \HH^n(k \ldt_R \dComplete(M))$ is nonzero, first consider the DG-module $\dComplete(M) \ldt_R k$. We have
\begin{align*}
\dComplete(M) \ldt_R k  &\simeq \dComplete(M) \ldt_R \dTorsion(k) \\
                        &\simeq (\dTorsion(\dComplete(M)) \ldt_R k \\
                        &\simeq \dTorsion(M) \ldt_R k \\
                        &\simeq M \ldt_R k
\end{align*}
The first and fourth isomorphisms result from the derived torsion of $k$ and the fact that derived torsion commutes with the derived tensor, as derived torsion is computed by tensoring with a telescope complex. The second isomorphism is from Greenless-May duality. Now consider the commutative diagram

\begin{equation*}
\begin{tikzcd}
M \arrow[r] \arrow[d] & M \ldt_R k \arrow[d, "\simeq"] \\
\dComplete(M) \arrow[r] & \dComplete(M) \ldt_R k
\end{tikzcd}
\end{equation*}
After taking the $n$th cohomology, the hypothesis that $M$ has maximal depth implies that the top row is nonzero. The commutativity and isomorphism force the bottom row to be nonzero. Thus, $\HH^n(\dComplete(M)) \to \HH^n(k \ldt_R \dComplete(M))$ is nonzero.
\end{proof}

The following is a derived analogue of \cite[Lemma 3.5]{MCMC}.

\begin{lemma}
\label{LocalInequality}
Let $R$ be a DG-ring, and $M$ a derived complete DG-module of maximal depth. For $\px \in \Spec \HR$, we have
\begin{equation*}
\depth_{R_\px}(M_\px) \geq \dim(\HR_\px)
\end{equation*}
\end{lemma}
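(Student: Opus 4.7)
I would mimic the proof of \cite[Lemma 3.5]{MCMC}, transporting the system-of-parameters argument to the DG setting by combining the maximal-depth hypothesis with Lemma~\ref{KoszulInf}.

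Write $d = \dim \HR_\px$ and $D = \dim \HR$. Using prime avoidance in $\HR$, I would choose elements $\bar{x}_1, \ldots, \bar{x}_d \in \mx$ whose images in $\HR_\px$ form a system of parameters for $\HR_\px$, and then extend to $\bar{x}_1, \ldots, \bar{x}_D \in \mx$ forming a system of parameters for $\HR$. Each $\bar{x}_i$ lifts to some $x_i \in R^0$.

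The core observation is that since $(x_1, \ldots, x_D)\HR$ has radical $\mx$, the Koszul characterization of depth (a DG analogue of the classical depth-sensitivity formula, following from the telescope computation of $\dTorsion$) gives
\begin{equation*}
\depth(\mx, M) \; = \; D + \inf K(x_1, \ldots, x_D; M).
\end{equation*}
The maximal-depth hypothesis $\depth(M) = D$ then forces $\inf K(x_1, \ldots, x_D; M) = 0$. Next, because $K(x_1, \ldots, x_d)$ is a perfect $R$-complex and derived completion commutes with tensoring by perfect complexes, the DG-module $K(x_1, \ldots, x_d; M)$ is itself derived $\mx$-complete. Applying Lemma~\ref{KoszulInf} with $M$ replaced by $K(x_1, \ldots, x_d; M)$ and the sequence $x_{d+1}, \ldots, x_D \in \mx$ yields
\begin{equation*}
\inf K(x_1, \ldots, x_D; M) \; \leq \; \inf K(x_1, \ldots, x_d; M),
\end{equation*}
so $\inf K(x_1, \ldots, x_d; M) \geq 0$.

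Finally, localization is exact, giving $K(x_1, \ldots, x_d; M_\px) \cong K(x_1, \ldots, x_d; M)_\px$ and hence $\inf K(x_1, \ldots, x_d; M_\px) \geq 0$. Since $(x_1, \ldots, x_d)R_\px$ has radical $\px R_\px$, applying the Koszul--depth identity a second time, now over $R_\px$, gives
\begin{equation*}
\depth_{R_\px}(M_\px) \; = \; d + \inf K(x_1, \ldots, x_d; M_\px) \; \geq \; d \; = \; \dim \HR_\px,
\end{equation*}
as claimed. The main obstacle is justifying the Koszul--depth identity $\depth_R(I, M) = n + \inf K(\mathbf{x}; M)$ in the DG setting: classically this is a consequence of the depth-sensitivity of the Koszul complex, and over DG-rings it should follow from Shaul's telescope description of $\dTorsion[I]$ together with the iterated distinguished-triangle construction of $K(\mathbf{x}; M)$ used in Lemma~\ref{KoszulInf}. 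The remainder of the argument is a rigid infimum chase.
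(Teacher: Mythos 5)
Your proof is correct and follows essentially the same route as the paper's: a system of parameters threaded through $\px$, the Koszul--depth identity \cite[Prop.~3.17]{Koszul} (which is exactly the identity you flag as the ``main obstacle''; it is a cited result, not something to re-derive), and Lemma~\ref{KoszulInf} applied to the partial Koszul DG-module to force $\inf K(x_1,\dots,x_d;M)\geq 0$. The only divergence is the final step, where the paper invokes the inequality $\depth_R(\px, M) \leq \depth_{R_\px}(M_\px)$ from \cite[Prop.~3.3]{Koszul} and stays over $R$, whereas you localize the Koszul complex and reapply the depth identity over $R_\px$; both are valid.
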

\begin{proof}
Let $\px \in \Spec(\HR)$, and set $h = \text{height } \px$. By \cite[Prop. 3.3]{Koszul}, we have that $\depth_R(\px, M) \leq \depth_{R_\px}(M_\px)$. Let $x_1, \ldots, x_d$ be a system of parameters in $\HR$ such that $x_1\, \ldots, x_h$ is in $\px$. Since $x_1, \ldots, x_d$ is a system of parameters in $\HR$, by \cite[Prop. 3.17]{Koszul} we have
\begin{equation*}
\depth_R(M) = \inf(K(x_1, \ldots, x_d); M) + d
\end{equation*}
which forces $\inf(K(x_1, \ldots, x_d); M) = 0$ as $M$ has maximal depth. By Lemma~\ref{KoszulInf}, we have $\inf(K(x_1, \ldots, x_h); M) \geq 0$. Again by \cite[Prop. 3.17]{Koszul} we have
\begin{align*}
\depth_R(\px, M)    &= \inf(K(x_1, \ldots, x_h); M) + h \\
                    &\geq 0 + h \\
                    &\geq h
\end{align*}
This implies that $h = \dim(\HR_\px)\leq \depth_{R_\px}(M_\px)$.
\end{proof}

We can now prove a version of the Improved New Intersection Theorem for DG-rings of constant amplitude given the existence of a DG-module of maximal depth. This proof adapts \cite[Thm. 3.6]{MCMC}, which establishes the Improved New Intersection Theorem for local rings given the existence of a complex of maximal depth. One important detail to note is that this proof assumes that the DG-ring in question has constant amplitude - otherwise the proof breaks down.

\begin{theorem}
\label{INIT}
Let $R$ be a DG-ring with $\amp(R) = n$ with constant amplitude (i.e., $\Supp(\HH^{\inf(R)}(R)) = \Spec(\HR)$), and $F \in \Dbf(R)$ a semi-free DG-module where $\HH^0(F) \neq 0$ and $\HH^i(F)$ is of finite length for $i \leq -1$. Suppose that there exists a DG-module $M \in \Dbf(R)$ of maximal depth. If an ideal $I$ in $\HR$ annihilates a minimal generator of $\HH^0(F)$, then $\projdim(F)+n \geq \dim \HR - \dim(\HR/I)$. 
\end{theorem}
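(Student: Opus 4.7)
The plan is to adapt the proof of \cite[Thm.~3.6]{MCMC} to the DG setting; the constant amplitude assumption is what makes the shift by $n = \amp(R)$ work out uniformly. First, by Lemma~\ref{CompleteMD} replace $M$ with $\dComplete(M)$, so that $M$ is derived $\mx$-complete and still has maximal depth. Let $x \in \HH^0(F)$ be a minimal generator annihilated by $I$. Since $x \notin \mx \HH^0(F)$, one has $\HH^0(F) \otimes_R k \neq 0$, and Lemma~\ref{split} produces a nonzero class $\xi \in \HH^n(F \ldt_R M)$ arising as the image of $x$, annihilated by $I$. Set $\mathfrak{a} := \operatorname{Ann}_{\HR}(\xi) \supseteq I$ and choose $\px \in \Spec(\HR)$ a prime minimal over $\mathfrak{a}$; then $\mathfrak{a} R_\px$ is $\px R_\px$-primary, so $\xi_\px \in \HH^n(F_\px \ldt_{R_\px} M_\px)$ is nonzero and $\px R_\px$-power torsion, and in particular the socle of this torsion is nonzero.

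Next, localize at $\px$. The constant amplitude hypothesis guarantees $\amp(R_\px) = n$, so the top cohomology remains in degree $n$, and by Lemma~\ref{LocalInequality} one has $\depth_{R_\px}(M_\px) \geq \dim(\HR_\px)$.

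The main obstacle is to establish the upper bound $\depth_{R_\px}(F_\px \ldt_{R_\px} M_\px) \leq n$. The plan is to use the Grothendieck spectral sequence
\[
E_2^{p,q} = \Ext^p_{\HR_\px}\!\bigl(k(\px),\, \HH^q(F_\px \ldt_{R_\px} M_\px)\bigr) \Longrightarrow \Ext^{p+q}_{R_\px}\!\bigl(k(\px),\, F_\px \ldt_{R_\px} M_\px\bigr).
\]
A nonzero socle element coming from $\xi_\px$ contributes a nonzero class at $E_2^{0,n}$. Incoming differentials vanish for degree reasons, and the outgoing differentials $d_r \colon E_r^{0,n} \to E_r^{r,\, n-r+1}$ are controlled by the finite length of $\HH^i(F)$ for $i \leq -1$ together with the amplitude restrictions coming from constant amplitude on $R$. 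Ensuring that these differentials cannot annihilate the socle class and hence $E_\infty^{0,n} \neq 0$, so that $\Ext^n_{R_\px}(k(\px), F_\px \ldt_{R_\px} M_\px) \neq 0$, is the delicate part of the argument; it yields $\depth_{R_\px}(F_\px \ldt_{R_\px} M_\px) \leq n$.

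Finally, the Derived Auslander--Buchsbaum Formula (Theorem~\ref{ABF}) gives
\[
\projdim_{R_\px}(F_\px) = \depth_{R_\px}(M_\px) - \depth_{R_\px}(F_\px \ldt_{R_\px} M_\px) \geq \dim(\HR_\px) - n.
\]
Since projective dimension does not increase under localization, $\projdim_R(F) \geq \projdim_{R_\px}(F_\px)$, and appealing to the catenary structure of $\HR$ (via passage to its $\mx$-adic completion, noting that the dualizing-complex hypothesis implicit in the existence of $M$ ensures catenariness) to obtain $\dim(\HR_\px) = \operatorname{ht}(\px) \geq \dim(\HR) - \dim(\HR/\px) \geq \dim(\HR) - \dim(\HR/I)$, the argument concludes with $\projdim_R(F) + n \geq \dim(\HR) - \dim(\HR/I)$.
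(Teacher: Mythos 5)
Your overall strategy (complete $M$, use Lemma~\ref{split} to produce a nonzero degree-$n$ class killed by $I$, bound a depth from above, and finish with the derived Auslander--Buchsbaum formula) parallels the paper's, but there are two genuine gaps at exactly the points where the real work happens. First, the upper bound $\depth_{R_\px}(F_\px \ldt_{R_\px} M_\px) \leq n$ is the heart of the argument and you do not prove it: you set up the hyperext spectral sequence and concede that showing the socle class survives to $E_\infty^{0,n}$ is ``the delicate part.'' It is not merely delicate --- the outgoing differentials $d_r\colon E_r^{0,n}\to E_r^{r,\,n-r+1}$ land in $\Ext^r_{\HR_\px}(k(\px),\HH^{n-r+1}(F_\px\ldt_{R_\px}M_\px))$, and nothing in your setup forces these to vanish when $F\ldt_R M$ has cohomology below degree $n$. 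The paper sidesteps this entirely by setting $s=\inf(F\otimes_R M)$ and splitting into cases: if $s<n$ it shows the relevant minimal prime must be $\mx$ (this is where constant amplitude is actually used, via $\inf(R_\px)=-n$ and Shaul's bound $\depth(R_\px)\le\dim\HR_\px+\inf(R_\px)$) and gets the stronger estimate $\projdim_R F\ge\dim\HR-s$; if $s=n$ the nonzero $I$-torsion class sits in the \emph{bottom} cohomology, so Lemma~\ref{DepthBound} gives $\depth_R(I,M\otimes_RF)=n$ exactly, and a Koszul argument via Lemma~\ref{KoszulInf} gives $\depth_R(M\otimes_RF)\le n+\dim(\HR/I)$. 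You never make this case distinction, and without knowing your class lies in the bottom cohomology you cannot convert ``nonzero torsion class in degree $n$'' into a depth bound.

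Second, your concluding inequality $\dim(\HR_\px)=\operatorname{ht}(\px)\ge\dim(\HR)-\dim(\HR/\px)$ is false for general noetherian local rings: one always has $\operatorname{ht}(\px)+\dim(\HR/\px)\le\dim(\HR)$, and equality requires $\HR$ to be catenary \emph{and} equidimensional. Neither is among the hypotheses --- the theorem assumes only the existence of a DG-module of maximal depth, not a dualizing DG-module, and even a dualizing complex does not force equidimensionality. This is precisely the pitfall the intersection theorems are designed to navigate. The paper's $s=n$ case avoids localization altogether and replaces your dimension inequality with the Koszul estimate $\depth_R(M\otimes_RF)\le\depth_R(I,M\otimes_RF)+\dim(\HR/I)$, obtained by comparing $K(\mathbf{a},\mathbf{b};M\otimes_RF)$ with $K(\mathbf{a};M\otimes_RF)$ for generators $\mathbf{a}$ of $I$ and lifts $\mathbf{b}$ of a system of parameters of $\HR/I$; this needs no catenary or equidimensionality hypothesis.
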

\begin{proof}
Let $M$ be a DG-module of maximal depth. Since the derived completion of any DG-module of maximal depth is of maximal depth, Lemma~\ref{CompleteMD} allows us to assume that $M$ is derived complete. Let $s = \inf(F \otimes_R M)$. By Lemma~\ref{split}, $s \leq n$.

\sloppy Let $\px \in \Spec(\HR)$ be a minimal prime of $\HH^s(F \otimes_R M)$. This implies that $\depth_{\HR_\px}(\HH^s(F \otimes_R M)_\px) = 0$. Since $(F \otimes_R M)_\px$ is not acyclic, neither $F_\px$ nor $M_\px$ is acyclic. In addition, since $F_\px$ is a bounded, graded-free $R_\px$ DG-module, it is a semi-free $R_\px$ DG-module. Thus, the derived Auslander-Buchsbaum formula implies that
\begin{align}
\projdim_{R_\px} F_\px    &= \depth_{R_\px} (M_\px) - \depth_{R_\px}(F \otimes_R M)_\px \nonumber \\
                            &= \depth_{R_\px}(M_\px) - s \nonumber \\
                            &\geq \dim \HR_\px - s \label{INIT1}
\end{align}
Note that $\depth_{R_\px}(F \otimes_R M)_\px = s$ by Lemma~\ref{DepthBound} since $\px R_\px$ is an associated prime of $\HH^s(F \otimes_R M)_\px$ , and that the above inequality is a consequence of Lemma~\ref{LocalInequality}.

If $s < n$, we shall show that $\px = \mx$. To see that $\px = \mx$, assume otherwise. Since the length of $\HH^i(F)$ is finite for $i \leq -1$, $\HH^i(F_\px) \cong \HH^i(F)_\px = 0$ for $i \leq -1$. This implies that $\inf(F_\px) = 0$, and so $\depth_{R_\px}(F_\px) \geq 0$. This yields
\begin{align*}
\depth_{R_\px}(R_\px)   &= \depth_{R_\px}(F_\px) + \projdim_{R_\px}(F_\px)\\
                &\geq \projdim_{R_\px}(F_\px) \\
                &\geq \dim \HR_\px - s \\
                &> \dim \HR_\px - n                
\end{align*}
Since $\text{Supp}(\HH^{\inf(R)}(R)) = \Spec(\HR)$, we have that $\inf(R_\px) = \inf(R) = -n$. (This step is where the constant amplitude assumption is needed - it is not clear to us if it can be removed in general.) By \cite[Prop. 3.5]{CMDGA}, we have that
\begin{align*}
\depth_{R_\px}(R_\px)   &\leq \dim \HR_\px + \inf(R_\px) \\
                        &\leq \dim \HR_\px - n
\end{align*}
This implies a contradiction, and so $\px = \mx$. As a result, Equation \ref{INIT1} becomes
\begin{align*}
\projdim_R F    &\geq \dim \HR - s \\
                &\geq \dim \HR - n
\end{align*}
which is the desired inequality.

Let $s = n$. Since $\HH^0(F)$ is finitely generated, every minimal generator in $\HH^0(F)$ gives rise to a minimal generator in $\HH^0(F) \otimes_{\HR} k$ by Nakayama's Lemma. By \cite[Lemma 3.2]{FinDim}, $\HH^0(F) \otimes_{\HR} k \simeq \HH^0(F \ldt_R k)$ as $\sup(F) = 0$. The minimal generator in $\HH^0(F \ldt_R k)$ gives rise to a nonzero element in $\HH^n(F \ldt_R (k \otimes_R M))$ by Diagram \ref{splitDiag} in Theorem \ref{split}. The commutativity of Diagram \ref{splitDiag} implies that there is a nonzero element in $\HH^n(M \otimes_R F)$ that is annihilated by $I$, and so $\Gamma_I\HH^n(M \otimes_R F) \neq 0$. Since $\inf(M \otimes_R F) = n$, this implies that $\depth_R(I, M \otimes_R F) = n$.

Now we show that 
\begin{equation*}
\depth_R(M \otimes_R F) \leq \depth_R(I, M \otimes_R F) + \dim(\HR/I)
\end{equation*} Let $\mathbf{a} = a_1, \ldots, a_i$ be a set of generators for $I$, and let $\mathbf{b} = b_1, \ldots, b_j$ be a set of elements whose images in $\HR/I$ form a system of parameters in $\HR/I$. The DG-modules $M \otimes_R F$ and $K(\mathbf{a}; M)$ are derived $\mx$-complete since $M$ is derived $\mx$-complete.  By Lemma ~\ref{KoszulInf}, we have
\begin{equation*}
\inf(K(\mathbf{a}, \mathbf{b}; (M \otimes_R F))) \leq \inf(K(\mathbf{a}; M \otimes_R F))
\end{equation*}
By \cite[Prop. 3.17]{Koszul}, we have $\inf(K(\mathbf{a}; M \otimes_R F)) = \depth_R(I, M \otimes_R F) - i$. The sequence $\mathbf{a}, \mathbf{b}$ generates $\mx$ up to radical, and so $\inf(K(\mathbf{a}, \mathbf{b}; (M \otimes_R F))) = \depth_R(M) - (i + j)$. Thus, the above inequality gives
\begin{align*}
\depth_R(M \otimes_R F) - (i + j) &\leq  \depth_R(I, M \otimes_R F) - i \\
\depth_R(M \otimes_R F) &\leq \depth_R(I, M \otimes_R F) + j \\
            &\leq \depth_R(I, M \otimes_R F) + \dim(\HR/I)
\end{align*}
Combining the last two paragraphs, we have that
\begin{equation*}
\depth(M \otimes_R F) \leq n + \dim(\HR/I) 
\end{equation*}
Using the derived Auslander-Buchsbaum formula (Theorem \ref{ABF}), we get
\begin{align*}
\projdim_R(F)   &= \depth_R(M) - \depth_R(M \otimes_R F) \\
                &\geq \depth_R(M) - (n + \dim(\HR/I))\\
                &\geq \dim(\HR) - \dim(\HR/I) - n
\end{align*}
Thus, we have
\begin{equation*}
\projdim_R(F) + n \geq \dim(\HR) - \dim(\HR/I)
\end{equation*}
in the case $s = n$.
\end{proof}

\section{Maximal Cohen-Macaulay DG-Complexes}
\subsection{Definition}

Shaul \cite{CMDGA} defines a Cohen-Macaulay DG-ring as follows.

\begin{defn}
Let $R$ be a DG-ring. Then $R$ is a \emph{local-Cohen-Macaulay DG-ring} (usually abbreviated to Cohen-Macaulay DG-ring) if $\it\amp(R) = \amp(\dTorsion(R))$. If $R_{\px}$ is local-Cohen-Macaulay for all $\px \in \Spec(\HR)$, then $R$ is \emph{global-Cohen-Macaulay}.
\end{defn}
If a DG-ring $R$ has a dualizing DG-module $D$, then Shaul proves that $R$ is Cohen-Macaulay if and only if $\amp(R) = \amp(D)$.

Expanding the definition of a Cohen-Macaulay DG-ring, Shaul gives the following definition of a Cohen-Macaulay DG-module.

\begin{defn}
Let $R$ be a DG-ring, and let $M$ be a finite DG $R$-module. Then $M$ is a \emph{Cohen-Macaulay} DG-module if $\amp(M) = \amp(\dTorsion(M)) = \amp(R)$. If in addition, $\sup(\dTorsion(M)) = \sup(M) + \dim(\HR)$, then $M$ is a \emph{maximal Cohen-Macaulay DG-module}.
\end{defn}

Shaul proves a number of facts about Cohen-Macaulay DG-rings and Cohen-Macaulay DG-modules in \cite{CMDGA}, most of which generalize facts about Cohen-Macaulay modules over local rings. Some important ones are listed below

\begin{itemize}
	\item If $\dim(\HR) = 0$, then $R$ is a Cohen-Macaulay DG-ring.
	\item $R$ is a Cohen Macaulay DG-ring if and only if there exists $M \in \Dbf(R)$ such that $\amp(M) = \amp(R)$ and $\injdim_R(M) < \infty$.
    \item Let $R$ have a dualizing DG-module $D$. If $M \in \Dbf(R)$ with $\amp(M) = \amp(R)$, then $M$ is Cohen-Macaulay if and only if $\amp(\RHom(M, D)) = \amp(R)$.
    \item If $R$ is a Cohen-Macaulay DG-ring with a dualizing DG-module $D$, then $D$ is a maximal Cohen-Macaulay DG-module.
\end{itemize}

As noted by Shaul in section 6 of \cite{CMDGA}, the definitions of Cohen-Macaulay DG-rings and Cohen-Macaulay DG-modules are generalizations of the classical concepts of Cohen-Macaulay rings and Cohen-Macaulay modules. However, this definition does not extend the notion of a Cohen-Macaulay \emph{complex} over a noetherian local ring (i.e., a complex $M$ over a noetherian local ring $R$ such that $\amp(\dTorsion(M)) = 0$). In particular, the amplitude of a Cohen-Macaulay DG-module must be the same as the amplitude of the DG-ring.

In \cite{MCMC}, Iyengar, Ma, Schwede, and Walker define a maximal Cohen-Macaulay complex over a noetherian local ring, and prove that such complexes exist if the ring has a dualizing complex. In the following paragraph, we propose a definition of a maximal Cohen-Macaulay DG-complex over a noetherian local DG-ring that generalizes the definition of a maximal Cohen-Macaulay complex given in \cite{MCMC}.

\begin{defn}[Maximal Cohen-Macaulay DG-Complex]
\label{MCM}
Let $R$ be a DG-ring with $\amp(R) = n$. A DG R-module $M$ is a maximal Cohen-Macaulay DG-complex if the following conditions hold:
\begin{enumerate}
	\item $M \in \Dbf(R)$
    \item The map $\HH^n(M) \to \HH^n(M \ldt_R k)$ (induced from the morphism of DG-modules $M \to M \ldt_R k$) is nonzero
    \item $\inf(M) = 0$
	\item $\amp(\dTorsion(M)) = n$
    \item $\depth(M) = \dim(\HR)$
\end{enumerate}
\end{defn}
In the case where $\amp(R) = 0$ (i.e., $R$ is a noetherian local ring), the above definition coincides exactly with the definition of a maximal Cohen-Macaulay complex in \cite{MCMC} - noting in this case that condition 3 is extraneous by \cite[Lemma 4.1]{MCMC}.

It is important to note that any maximal Cohen-Macaulay DG-complex is a DG-module of maximal depth, as conditions 2 and 5 of Definition \ref{MCM} immediately imply.

When a DG-ring $R$ has a dualizing DG-module, the following definition of a maximal Cohen-Macaulay DG-complex is equivalent to Definition~\ref{MCM}. This equivalence follows from the DG version of local duality \cite[Thm. 7.26]{Injective}.

\begin{defn}
\label{MCMD}
Let $R$ be a DG-ring with $\amp(R) = n$ and $\dim(\HR) = d$. Suppose $R$ has a dualizing DG-module $D$ such that $D$ is right-normalized $(\inf(D) = 0)$. A DG-module $M$ is a maximal Cohen-Macaulay DG-complex if the following conditions hold:
\begin{enumerate}
	\item $M \in \Dbf(R)$    
    \item The map $\xi_{M^\dag}^{d-n}: \Ext_R^{d-n}(k, M^\dag) \to \Hc^{d-n}(M^\dag)$ (induced from the DG-morphism $R \to k$) is nonzero
    \item $\sup(\dTorsion(M^\dag)) = d$
	\item $\amp(M^\dag) = n$
    \item $\sup(M^\dag) = 0$
\end{enumerate}
\end{defn}
In the case where $\amp(R) = 0$ (i.e., $R$ is a noetherian local ring), the above definition again coincides exactly with \cite{MCMC} - noting that condition 3 is implied by condition 2. 
\begin{theorem}
	Let $R$ be a DG-ring with $\amp(R) = n$ and $\dim(\HR) = d$. Suppose that $R$ has a dualizing DG-module $D$ (which is right-normalized). Then Definitions \ref{MCM} and \ref{MCMD} are equivalent.
\end{theorem}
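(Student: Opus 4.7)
The strategy is to translate each condition of Definition~\ref{MCM} into the corresponding condition of Definition~\ref{MCMD} by combining biduality with the DG local duality theorem \cite[Thm.~7.26]{Injective}. Throughout, write $(-)^\vee := \RHom_{R^0}(-, E)$ for Matlis duality, where $E$ is an injective hull of $k$, and set $d := \dim(\HR)$. For right-normalized $D$, DG local duality should supply a natural isomorphism
\begin{equation*}
\dTorsion(X) \;\cong\; \Sigma^{-d}\bigl(X^\dagger\bigr)^\vee
\end{equation*}
for every $X \in \Dbf(R)$. Passing to cohomology and using that Matlis duality is exact and faithful on finitely generated and artinian $\HR$-modules yields the dictionary $\HH^i(\dTorsion X) \cong (\HH^{d-i}(X^\dagger))^\vee$, and in particular $\sup(\dTorsion X) = d - \inf(X^\dagger)$, $\inf(\dTorsion X) = d - \sup(X^\dagger)$, and $\amp(\dTorsion X) = \amp(X^\dagger)$.

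I would then verify each of the five conditions in turn. Condition~(1) is equivalent on both sides to $M^\dagger \in \Dbf(R)$ by biduality and the finiteness of $D$. Applying the dictionary with $X = M$ gives $\depth(M) = \inf(\dTorsion M) = d - \sup(M^\dagger)$, so (5) $\depth(M) = d$ matches (5) $\sup(M^\dagger) = 0$. Likewise the identity $\amp(\dTorsion M) = \amp(M^\dagger)$ matches (4) with (4). Applying the dictionary with $X = M^\dagger$ and invoking biduality $M^{\dagger\dagger} \cong M$ gives $\sup(\dTorsion M^\dagger) = d - \inf(M)$, so (3) $\inf(M) = 0$ matches (3) $\sup(\dTorsion M^\dagger) = d$.

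For condition~(2) I would first establish $\RHom_R(k, D) \cong \Sigma^{-d}k$, which follows from local duality applied to the $\mx$-torsion module $k$ together with the right-normalization $\inf(D) = 0$. Derived Hom-tensor adjunction then gives
\begin{equation*}
\RHom_R(k, M^\dagger) \;\cong\; \RHom_R(M \ldt_R k, D) \;\cong\; \Sigma^{-d}\RHom_k(M \ldt_R k, k),
\end{equation*}
so $\Ext_R^{d-n}(k, M^\dagger) \cong \Hom_k(\HH^n(M \ldt_R k), k)$, which agrees with $\HH^n(M \ldt_R k)^\vee$ since $\HH^n(M \ldt_R k)$ is a $k$-vector space. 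Combined with the dictionary $\Hc^{d-n}(M^\dagger) \cong \HH^n(M)^\vee$ and naturality of DG local duality applied to the canonical morphism $M \to M \ldt_R k$, the map $\xi_{M^\dagger}^{d-n}$ is identified with the Matlis dual of $\HH^n(M) \to \HH^n(M \ldt_R k)$. Faithfulness of Matlis duality on the relevant modules then makes (2) match (2).

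The main obstacle is the bookkeeping in condition~(2): one must verify that under all these identifications, $\xi_{M^\dagger}^{d-n}$ really does correspond to the Matlis dual of $\HH^n(M) \to \HH^n(M \ldt_R k)$. This reduces to the naturality of the DG local duality isomorphism applied to the canonical map $M \to M \ldt_R k$, together with compatibility of the Hom-tensor adjunction with the identification $\RHom_R(k, D) \cong \Sigma^{-d}k$. Conditions (1), (3), (4), and (5) are then routine amplitude and depth bookkeeping under the dictionary.
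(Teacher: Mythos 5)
Your proposal is correct and follows essentially the same route as the paper: both arguments translate each condition through the DG local duality theorem \cite[Thm.~7.26]{Injective} together with faithfulness of the (generalized) Matlis dual, reducing conditions (3)--(5) to amplitude/sup/inf bookkeeping and identifying $\xi^{d-n}_{M^\dagger}$ with the dual of $\HH^n(M)\to\HH^n(k\ldt_R M)$. The only cosmetic difference is in condition (2), where you pass through $\RHom_R(k,D)\cong\Sigma^{-d}k$ and Hom-tensor adjunction, whereas the paper runs the same identification via $E\cong\Sigma^{d}\dTorsion(D)$ and Greenlees--May duality.
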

\begin{proof}
Let $M \in \Dbf(R)$, and suppose that $M$ is a maximal Cohen-Macaulay DG-complex as defined in Definition \ref{MCM}. For each step of the proof, we will need the DG-version of local duality, proved in Theorem 7.26 in \cite{Injective}. we will restate it here for convenience.	
\begin{lemma}[Generalized Local Duality]
Let $M \in \Dbf(R)$. Then
\begin{equation*}
	\dTorsion(M) \simeq \RHom_R(\RHom_R(M, \Sigma^{d} D), E)
\end{equation*}	
where $E$ is the generalized injective hull of $R$. By passing to cohomology we also have
\begin{equation*}
	\Hc^i(M) \simeq \Hom_{\HR}(\Ext_R^{d-i}(M, D), \HH^0(E))
\end{equation*}	
for every $i \in \mathbb{Z}$. Note that $\HH^0(E)$ is the injective hull of the local ring $\HR$.
\end{lemma}	
To show condition 3, note that by local duality, we have	
\begin{equation*}
	\dTorsion(M^\dagger) \simeq \RHom_R(\Sigma^{d}(M^{\dagger \dagger}), E)
\end{equation*}	
which simplifies to
\begin{equation*}
	\dTorsion(M^\dagger) \simeq \Sigma^{-d}\RHom_R(M, E)
\end{equation*}	
Since $\inf(M) = 0$, we have $\sup(\RHom_R(M, E)) = 0$ by \cite[Thm. 4.10]{Injective}. This implies that $\sup(\Sigma^{-d}\RHom_R(M, E)) = d$, which gives $\sup(\dTorsion(M^{\dagger})) = d$.
	
To show condition 4, we have that $\amp(\dTorsion(M)) = n$ by assumption, and so \begin{equation*}
\amp(\RHom_R(M^{\dagger}, E)) = n
\end{equation*} by local duality. This implies that $\amp(M^{\dagger}) = n$ by \cite[Thm. 4.10]{Injective}.
	
To show condition 5, we have that $\depth(M) = d$ by assumption. This forces $\inf(\dTorsion(M)) = d$ by the definition of depth, and so $\inf(\RHom_R(M^{\dagger}, E)) = 0$ by local duality. By Theorem 4.10 in \cite{Injective}, this forces $\sup(M^{\dagger}) = 0$.
	
Finally, to show condition 2, consider the map $\HH^n(M) \to \HH^n(k \ldt_R M)$, which is nonzero by assumption. Since the biduality morphism is an isomorphism in the derived category, we have that the map $\HH^n(M^{\dag \dag}) \to \HH^n((k \ldt_R M)^{\dag\dag})$ is nonzero. Applying the $\Hom_{\HR}(-, \HH^0(E))$ functor yields the map
\begin{equation} \label{eq:1}
	\Hom_{\HR}(\HH^n((k \ldt_R M)^{\dag\dag}), \HH^0(E)) \to \Hom_{\HR}(\HH^n(M^{\dag\dag}), \HH^0(E))
\end{equation}
which is nonzero since $\HH^0(E)$ is a faithfully injective $\HR$-module. By \cite[Thm. 4.10]{Injective}, Equation \ref{eq:1} can be rewritten as
\begin{equation} \label{eq:2}
	\HH^{-n}(\RHom_R((k \ldt_R M)^{\dag\dag}, E)) \to \HH^{-n}(\RHom_R((M)^{\dag\dag}, E))
\end{equation}	
The object $\RHom_R((M)^{\dag\dag}, E)$ is equivalent to $\Sigma^d(\dTorsion(M^\dag))$ by local duality. We can rewrite $\RHom_R((k \otimes_R M)^{\dag\dag}, E)$ through the following series of transformations:	
\begin{align*}
\RHom_R((k \ldt_R M)^{\dag\dag}, E) 	
	&\simeq \RHom_R(k \ldt_R M, E) \\		
	&\simeq \RHom_R(k, \RHom_R(M, E))\\
	&\simeq \RHom_R(k, \RHom_R(M, \Sigma^d(\dTorsion(D)))\\
	&\simeq \Sigma^d \RHom_R(k, \RHom(M, \dTorsion(D))\\
	&\simeq \Sigma^d \RHom_R(\dTorsion(k), \RHom_R(M, D))\\
	&\simeq \Sigma^d \RHom_R(k, M^{\dag})
\end{align*}							 			
The third isomorphism holds since $E \simeq \Sigma^d\dTorsion(D)$ by \cite[Prop. 7.25]{Injective}, and the fifth isomorphism holds by \cite[Prop. 7.24]{Injective} and Greenless-May duality. The sixth isomorphism holds since the DG-module $k$ is $\mx$-torsion. Equation \ref{eq:2} now becomes	
\begin{equation} \label{eq:3}
	\HH^{-n}(\Sigma^d \RHom_R(k, M^{\dag})) \to \HH^{-n}(\Sigma^d(\dTorsion(M^{\dag})))
\end{equation}
which simplifies to
\begin{equation*}
	\Ext^{d-n}_R(k, M^{\dag}) \to \Hc^{d-n}(M^{\dag})
\end{equation*}	
Thus, condition 2 is satisfied, and $M$ is a maximal Cohen-Macaulay DG-complex as given in Definition \ref{MCMD}.
Each step of the forward direction can be straightforwardly reversed to obtain the converse.
\end{proof}

\subsection{Maximal Cohen-Macaulay DG-Modules and DG-Complexes}

When restricting to the case of DG-rings, our definition of a maximal Cohen-Macaulay DG-complex and Shaul's definition of a maximal Cohen-Macaulay DG-module coincide.

\begin{theorem}
Let $R$ be a DG-ring with $\amp(R) = n$. Then $R$ is a local Cohen-Macaulay DG-ring (as defined by Shaul) if and only if the DG $R$-module $\Sigma^{-n}R$ is a maximal Cohen-Macaulay DG-complex (as defined in Definition~\ref{MCM}).
\end{theorem}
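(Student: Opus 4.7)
The plan is to unpack each of the five conditions of Definition~\ref{MCM} applied to the specific DG-module $M = \Sigma^{-n}R$. Most of them reduce to formal identities about the suspension functor and the unit property of $\ldt_R$, while conditions (4) and (5) encode precisely Shaul's Cohen-Macaulay condition on $R$.

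First I would handle the easy conditions. Condition (1) is immediate since $R \in \Dbf(R)$, and (3) follows from the shift: because $R$ is non-positive with $\inf(R) = -n$, we have $\inf(\Sigma^{-n}R) = 0$. For condition (2), the unit property of derived tensor gives $\Sigma^{-n}R \ldt_R k \cong \Sigma^{-n}(R \ldt_R k) \cong \Sigma^{-n}k$, and the canonical morphism $\Sigma^{-n}R \to \Sigma^{-n}k$ restricts on $\HH^n$ to the canonical surjection $\HR \to k$, which is nonzero.

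The substantive equivalence lies in conditions (4) and (5). Since derived torsion commutes with suspension, $\dTorsion(\Sigma^{-n}R) \cong \Sigma^{-n}\dTorsion(R)$, and amplitude is invariant under shifts; hence condition (4) is equivalent to $\amp(\dTorsion(R)) = n = \amp(R)$, which is exactly Shaul's local Cohen-Macaulay condition. For condition (5), assuming $R$ is Cohen-Macaulay, I would invoke the equality case of Shaul's depth-dimension relation for CM DG-rings from \cite{CMDGA}, and then transport through $\Sigma^{-n}$ using $\depth(\Sigma^{-n}M) = -\inf(\dTorsion(\Sigma^{-n}M))$ and $\dTorsion(\Sigma^{-n}R) \cong \Sigma^{-n}\dTorsion(R)$ to conclude $\depth(\Sigma^{-n}R) = \dim(\HR)$.

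The converse is then nearly immediate: if $\Sigma^{-n}R$ is a maximal Cohen-Macaulay DG-complex, condition (4) alone yields $\amp(\dTorsion(R)) = n$, and so $R$ is Cohen-Macaulay. The main point requiring care in a full write-up is tracking the cohomological shift conventions consistently between Definition~\ref{MCM} (especially conditions (3) and (5)) and Shaul's depth formula; no new derived-categorical input beyond shift-invariance of $\dTorsion$ and the unital property of $\ldt_R$ is needed.
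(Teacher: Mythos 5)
Your proposal is correct and follows essentially the same route as the paper: conditions (1)--(3) are checked by the same formal shift and unit-of-tensor arguments, condition (4) is identified as exactly Shaul's local Cohen-Macaulay condition via shift-invariance of $\dTorsion$, condition (5) is obtained from Shaul's depth result for Cohen-Macaulay DG-rings (the paper cites \cite[Prop.\ 6.10]{CMDGA}), and the converse is read off from the failure of condition (4).
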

\begin{proof}
First suppose that $R$ is a local Cohen-Macaulay DG-ring. We immediately have that $R \in \Dbf(R)$ and so $\Sigma^{-n}R \in \Dbf(R)$. Since $R$ is local Cohen-Macaulay, $\amp(\dTorsion(\Sigma^{-n}R)) = \amp(\dTorsion(R)) = n$ by definition. The map $\HH^n(\Sigma^{-n}R) \to \HH^n(\Sigma^{-n}R \ldt_R k)$ can be rewritten as $\HH^0(R) \to \HH^0(R \ldt_R k)$, which is just the canonical surjection $\HR \to k$. Since $\inf(R) = -n$, $\inf(\Sigma^{-n}R) = 0$. Finally, the sequential depth of $\Sigma^{-n}R$ is $\dim(\HR)$ by Proposition 6.10 in \cite{CMDGA}, and since $\inf(\Sigma^{-n}R)=0$, we have that $\depth(\Sigma^{-n}R) = d$.

Conversely, if $R$ is not a local Cohen-Macaulay DG-ring, we have
\begin{equation*}
\amp(\dTorsion(\Sigma^{-n}R)) = \amp(\dTorsion(R)) \neq \amp(R)
\end{equation*}
which implies that $\Sigma^{-n}R$ cannot be a maximal Cohen-Macaulay DG-complex.
\end{proof}

However, there is a difference between the two definitions in the case of DG-modules. Every maximal Cohen-Macaulay DG-module (as defined by Shaul) is a Cohen-Macaulay DG-complex (up to a shift). However, maximal Cohen-Macaulay DG-modules must have the same amplitude as the base DG ring. But even in the ring case, there exist maximal Cohen-Macaulay complexes that are not modules, and so cannot be maximal Cohen-Macaulay DG-modules according to Shaul's definition. Thus, not every maximal Cohen-Macaulay DG-complex is a maximal Cohen-Macaulay DG-module (as defined by Shaul).

\section{The Canonical Element Theorem for DG-Rings}

Let $R$ be a DG-ring with right normalized dualizing DG-module $D$. By the definition of a DG-ring, $\HR$ is a noetherian local ring with maximal ideal $\mx$ and residue field $k$. By \cite[Prop. 7.5]{Tilting}, $\RHom_R(\HR, D)$ is a dualizing complex for $\HR$, and since $\inf(\RHom_R(\HR, D)) = \inf(D)$ by Proposition 3.3 in \cite{Injective}, $\RHom_R(\HR, D)$ is a right-normalized dualizing complex for $\HR$. Again by  \cite[Prop. 3.3]{Injective}, we have $\HD \cong \HH^0(\RHom_R(\HR, D)) $. By \cite[Thm 4.5, Lemma 4.8]{MCMC}, the map $\Ext^d_{\HR}(k, \HD) \to \HH_{\mx\HR}^d(\HD)$ is nonzero (where $d$ is the dimension of $\HR$). This result is equivalent to the Canonical Element Theorem for noetherian local rings. We will use this map to conclude that the map $\Ext^d_R(k, \HD) \to \Hc[\mx R]^d(\HD)$ is nonzero. This will establish a Canonical Element Theorem for DG-rings.

Before we prove the main result, we will establish independence of base change for derived torsion over DG-rings. This is similar to Proposition 2.10 in \cite{CMDGA}, but establishes independence of base change through restriction of scalars, not through extension of scalars.

\begin{lemma}
\label{IndBase}
	Let $R$ and $S$ be DG-rings equipped with a DG-ring map $f: R\to S$. Consider any ideal $\mathfrak{a} \subset \HR$, and let $\mathfrak{b} \subset \HS$ be the extension of $\mathfrak{a}$ to $\HS$ along $f$. If $M$ is a DG $S$-module, then $\dTorsion[\mathfrak{a}](M) \simeq \dTorsion[\mathfrak{b}](M)$ (viewing $M$ as a DG $R$-module through restriction of scalars). 
\end{lemma}
\begin{proof}
Let $\overline{r}_1, \ldots, \overline{r}_n$ be a sequence of elements in $\HR$ that generates $\mathfrak{a}$, and let $r_1, \ldots, r_n$ be lifts of these elements to $R^0$. By Corollary 2.13 in \cite{DerCom}, we have that
\begin{equation*}
	\dTorsion[\mathfrak{b}](M) \simeq \tel(S^0; f(r_1), \ldots, f(r_n)) \otimes_{S^0} S \otimes_S M
\end{equation*}
By applying the base change property of the telescope complex to the map $R^0 \to S^0$, we have that
\begin{equation*}
	\tel(R^0; r_1, \ldots, r_n) \otimes_{R^0} S^0 \simeq \tel(S^0; f(r_1), \ldots, f(r_n)) 
\end{equation*}
Rewriting the first equation, we have
\begin{align*}
	\dTorsion[\mathfrak{b}](M) 	&\simeq \tel(S^0; f(r_1), \ldots, f(r_n)) \otimes_{S^0} S \otimes_S M \\
						  			&\simeq \tel(R^0; r_1, \ldots, r_n) \otimes_{R^0} S^0 \otimes_{S^0} S \otimes_S M \\
						  			&\simeq \tel(R^0; r_1, \ldots, r_n) \otimes_{R^0} R \otimes_R M \\
						  			&\simeq \dTorsion[\mathfrak{a}] (M)
\end{align*}
where the last line comes from \cite[Cor. 2.13]{DerCom}.
\end{proof}

\begin{theorem}[Canonical Element Theorem for DG-rings]
\label{zeromap}
Let $R$ be a DG-ring with right normalized dualizing DG-module $D$. The map of DG $R$-modules
\begin{equation*}
\xi^d_{\HD}: \Ext^d_R(k, \HD) \to \Hc[\mx R]^d(\HD)
\end{equation*} is nonzero.
\end{theorem}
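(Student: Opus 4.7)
The plan is to reduce to the classical Canonical Element Theorem over the noetherian local ring $\HR$ (recalled immediately before the theorem as a consequence of \cite[Thm. 4.5, Lemma 4.8]{MCMC}) and transfer the resulting nonvanishing across the canonical DG-ring projection $\pi \colon R \to \HR$ (which in degree $0$ is the surjection $R^0 \twoheadrightarrow \HR$ and is zero in other degrees).

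For the target of $\xi^d_{\HD}$, I would apply Lemma~\ref{IndBase} to $\pi$ with $\mathfrak{a} = \mx \subseteq \HR$, $S = \HR$, and $M = \HD$ regarded as a DG $R$-module via restriction. This yields a natural isomorphism $\dTorsion[\mx\HR](\HD) \cong \dTorsion[\mx R](\HD)$ in $\sD(R)$, and hence $\Hc[\mx R]^d(\HD) \cong \HH^d_{\mx\HR}(\HD)$. For the source, restriction of scalars along $\pi$ produces a natural comparison map $\RHom_{\HR}(k,\HD) \to \RHom_R(k,\HD)$; equivalently, this map arises from the extension-restriction adjunction $\RHom_R(k,\HD) \cong \RHom_{\HR}(k \ldt_R \HR, \HD)$ together with the augmentation $k \ldt_R \HR \to k$.

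The next step is to assemble these data into the square
\begin{equation*}
\begin{tikzcd}
\RHom_{\HR}(k,\HD) \arrow[r,"\xi_{\HR}"] \arrow[d] & \dTorsion[\mx\HR](\HD) \arrow[d,"\cong"]\\
\RHom_R(k,\HD) \arrow[r,"\xi_R"] & \dTorsion[\mx R](\HD)
\end{tikzcd}
\end{equation*}
whose horizontal arrows are the natural transformations $\RHom(k,-) \to \dTorsion[\mx](-)$ inducing $\xi^d_{\HR}$ and $\xi^d_{\HD}$ respectively. Granting commutativity of this square, any $\alpha \in \Ext^d_{\HR}(k,\HD)$ with $\xi^d_{\HR}(\alpha) \neq 0$ (supplied by \cite[Thm. 4.5, Lemma 4.8]{MCMC}) is sent via the left vertical map to some $\beta \in \Ext^d_R(k,\HD)$, and commutativity combined with the right vertical isomorphism forces $\xi^d_{\HD}(\beta) \neq 0$, so $\xi^d_{\HD}$ is nonzero.

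The main obstacle is verifying commutativity of this square, which amounts to the naturality of the transformation $\xi$ under restriction of scalars. The cleanest route uses the telescope-complex model underlying Lemma~\ref{IndBase}: both derived torsions in the right column are computed by $\tel(R^0;\mathbf{x}) \otimes_{R^0} \HD$ for lifts $\mathbf{x} \subset R^0$ of generators of $\mx$, so this common presentation, combined with a semi-free $R$-resolution of $k$ chosen compatibly with one over $\HR$, reduces the commutativity to a direct chain-level check.
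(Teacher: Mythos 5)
Your proposal is correct and follows the same overall strategy as the paper's proof---reduce to the classical Canonical Element Theorem over $\HR$ along the projection $R \to \HR$, using Lemma~\ref{IndBase} to identify $\dTorsion[\mx R](\HD)$ with $\dTorsion[\mx\HR](\HD)$---but it treats the Ext side in a genuinely different, and in fact more economical, way. The paper dualizes the unit $k \to \HR \ldt_R k$ to obtain a map $\phi\colon \RHom_R(k,\HD) \cong \RHom_{\HR}(\HR \ldt_R k, \HD) \to \RHom_{\HR}(k,\HD)$, and then must show that $\overline{\phi}_d$ is surjective; this is done by computing with a minimal semi-free resolution $F$ of $\HR$ over $R$, using that $F \otimes_R k$ has zero differential to exhibit $\Ext^d_{\HR}(k,\HD)$ as a direct summand of $\Ext^d_R(k,\HD)$ with $\overline{\phi}_d$ the projection. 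You instead dualize the counit $k \ldt_R \HR \to k$, obtaining the section (restriction of scalars) $\Ext^d_{\HR}(k,\HD) \to \Ext^d_R(k,\HD)$, for which no surjectivity is needed: a class $\alpha$ with $\xi^d_{\HR}(\alpha) \neq 0$ pushes forward to some $\beta$, and commutativity of your square together with the right-hand isomorphism forces $\xi^d_{\HD}(\beta) \neq 0$. This bypasses the minimal-resolution computation entirely. The commutativity you flag as the main obstacle is real but unproblematic (the paper's argument rests on the analogous square and asserts it with comparable brevity): both horizontal arrows are the factorizations through derived torsion of the evaluation maps $\RHom(k,\HD) \to \HD$ induced by $R \to k$ and $\HR \to k$, these evaluations are manifestly compatible with restriction of scalars, and since $\RHom_{\HR}(k,\HD)$ is derived $\mx$-torsion, maps from it to $\dTorsion[\mx R](\HD)$ and to $\HD$ agree by adjunction, so commutativity of the square with target $\HD$ forces commutativity of the square with target the derived torsion; the chain-level telescope verification you sketch is an equally valid route.
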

\begin{proof}
Consider the map $R \to \HR$ of DG $R$-modules. By applying the $-\ldt_R k$ functor we obtain the map $R \ldt_R k \to \HR \ldt_R k$ of DG $\HR$-modules. This reduces to the map $k \to \HR \ldt_R k$ of DG $\HR$-modules. By applying the $\RHom_{\HR}(-, \HD)$ functor we obtain the map 
\begin{equation}
\phi: \RHom_{\HR}(\HR \ldt_R k, \HD) \to \RHom_{\HR}(k, \HD)
\end{equation}
To compute $\phi$, first consider a minimal semi-free resolution $F$ of $\HR$ (with respect to $R$). By the properties of the derived tensor functor, $\HR \ldt_R k$ is given by $F \otimes_R k$. Since $F$ is a minimal semi-free resolution of $\HR$, the $\HR$ complex $F \otimes_R k$ has trivial differential, and $\sup(F \otimes_R k) = 0$ with $\HH (F \otimes_R k) = (F \otimes_R k)^0 = k$. We then have that $\RHom_{\HR}(\HR \ldt_R k, \HD)$ is equivalent to $\RHom_{\HR}(F \otimes_R k, \HD)$. Let $E^\bullet$ be an injective resolution of $\HD$ (with respect to $\HR$). By the properties of the derived Hom functor, $\RHom_{\HR}(F \otimes_R k, \HD)$ is given by $\Hom_{\HR}(F \otimes_R k, E^\bullet)$. The first three nonzero components of this complex are shown below. 

\begin{center}
\begin{tikzcd}[row sep = 0.2 em]
&& \Hom_R(k, E^2) \\
& \Hom_R(k, E^1) \arrow[ur] \arrow[dr] & \bigoplus\\
\Hom_R(k, E^0) \arrow[ur] \arrow[dr] & \bigoplus & \Hom_R((F \otimes_R k)_1, E^1) \\
& \Hom_R((F \otimes_R k)_1, E^0) \arrow[ur]\arrow[dr] & \bigoplus \\
&& \Hom_R((F \otimes_R k)_2, E^0)
\end{tikzcd}
\end{center}

Since $F \otimes_R k$ has trivial differential, each downward diagonal arrow is zero. This forces the $i$th homology of $\RHom_{\HR}(\HR \ldt_R k, \HD)$ to be the direct sum $\bigoplus\limits_{0 \leq j \leq i} H_{i-j}(\Hom(F \otimes_R k)_j, E^\bullet)$. Thus, the cohomology of $\RHom_{\HR}(k, \HD)$ is a direct summand of the cohomology of $\RHom_{\HR}(\HR \ldt_R k, \HD)$, and the induced map on cohomology of $\phi$ is the projection map.

The map $\phi$ fits into the commutative diagram given below

\begin{center}
\begin{tikzcd}
    \RHom_{\HR}(\HR \ldt_R k, \HD) \arrow[r] \arrow[d, "\phi"] & \dTorsion[\mx R](\HD) \arrow[d, "\simeq"] \\ \RHom_{\HR}(k, \HD) \arrow[r] & \dTorsion[\mx\HR](\HD)
\end{tikzcd}
\end{center}

where the rightmost arrow is a isomorphism by the preceding lemma (with respect to the map $R \to \HR$). Since $\RHom_{\HR}(\HR \ldt_R k, \HD) \simeq \RHom_{R}(k, \HD)$ by derived Hom-tensor adjunction, we have the diagram

\begin{center}
\begin{tikzcd}
    \RHom_{R}(k, \HD) \arrow[r] \arrow[d, "\phi"] & \dTorsion[\mx R](\HD) \arrow[d, "\simeq"] \\ \RHom_{\HR}(k, \HD) \arrow[r] & \dTorsion[\mx \HR](\HD)
\end{tikzcd}
\end{center}

which by taking homology induces the following diagram

\begin{center}
\begin{tikzcd}
    \Ext_R^d(k, \HD) \arrow[r] \arrow[d, "\overline{\phi}_d"] & \Hc[\mx R]^d(\HD) \arrow[d, "\cong"] \\
    \Ext_{\HR}^d(k, \HD) \arrow[r] & \Hc[\mx\HR]^d(\HD)
\end{tikzcd}
\end{center}

The Canonical Element Theorem asserts that the bottom map is nonzero. Since $\Ext^d_{\HR}(k, \HD)$ is a direct summand of $\Ext^d_R(k, \HD)$, the top map must be nonzero.
\end{proof}

\begin{theorem}
\label{Existence}
Let $R$ be a complete DG-ring with constant amplitude and $\amp(R) = n$. If $D$ is a right-normalized dualizing DG-module, then the DG-module $\RHom_R(\Sigma^n(D^{\leq n}), D)$ is a maximal Cohen-Macaulay DG-complex.
\end{theorem}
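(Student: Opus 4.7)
The plan is to verify the five conditions of Definition~\ref{MCMD} for $M := \RHom_R(\Sigma^n(D^{\leq n}), D)$. Setting $N := \Sigma^n D^{\leq n}$, biduality on $\Dbf(R)$ yields $M^\dagger \cong N$, so the conditions reduce to properties of $N$. Conditions (1), (4), and (5) come from amplitude bookkeeping: $N \in \Dbf(R)$ since $D \in \Dbf(R)$ and smart truncation preserves this; $\inf(N) = \inf(D^{\leq n}) - n = -n$; and $\sup(N) = 0$ provided $\HH^n(D) \neq 0$, which holds under the constant amplitude hypothesis (via local duality, the nonvanishing of $\HH^n(D)$ is tied to the nonvanishing with full support of $\HH^{\inf(R)}(R)$). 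Granting this, $\amp(N) = n$ follows immediately.

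For condition (3), $\sup(\dTorsion N) = d$, I would apply the local duality formula $\Hc^i(X) \cong \Hom_{\HR}(\Ext^{d-i}_R(X, D), \HH^0(E))$. Since $\dTorsion$ commutes with $\Sigma^n$, it suffices to show $\sup(\dTorsion D^{\leq n}) = d+n$. The upper bound $\sup(\dTorsion X) \leq d + \sup X$ is a consequence of local duality together with $\inf \RHom_R(X, D) \geq -\sup X$. For the matching lower bound, biduality implies $\Ext^{-n}_R(D^{\leq n}, D) \neq 0$ (the bottommost Ext for an object with $\sup = n$), whence $\Hc^{d+n}(D^{\leq n}) \neq 0$.

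Condition (2) is the main obstacle: I need the canonical element map $\xi^{d-n}_N: \Ext^{d-n}_R(k, N) \to \Hc^{d-n}(N)$ to be nonzero, which after unwinding the shift amounts to $\Ext^d_R(k, D^{\leq n}) \to \Hc^d(D^{\leq n})$ being nonzero. The essential tool is the Canonical Element Theorem for DG-rings (Theorem~\ref{zeromap}), which provides nonvanishing for $\HD \cong D^{\leq 0}$. My plan is to exploit naturality of $\xi$ along the truncation inclusion $D^{\leq 0} \hookrightarrow D^{\leq n}$: by analyzing the long exact sequences coming from the triangle $D^{\leq 0} \to D^{\leq n} \to (D^{\leq n}/D^{\leq 0}) \to$ under both $\RHom_R(k,-)$ and $\dTorsion$, combined with naturality of $\xi$, I would propagate the nonvanishing from $\HD$ to $D^{\leq n}$. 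The hard part is controlling the connecting maps and vanishing results---using $\injdim(D) = d$ and the cohomological range of the truncation quotient---so that the nonzero element witnessing the Canonical Element Theorem survives the transfer. The constant amplitude hypothesis is used at multiple points, notably in establishing $\HH^n(D) \neq 0$, consistent with the remark that it cannot currently be lifted.
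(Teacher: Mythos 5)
Your overall architecture matches the paper's: verify Definition~\ref{MCMD} for $M$ via $M^\dagger \cong N = \Sigma^n D^{\leq n}$, handle conditions (1), (4), (5) by showing $\HH^n(D) \neq 0$ with full support using constant amplitude, and reduce condition (2) to the nonvanishing of $\xi^d_{D^{\leq n}}$ via Theorem~\ref{zeromap}. But the step you yourself flag as ``the hard part'' of condition (2) is a genuine gap, and the mechanism you propose for it is not the one that works. Naturality of $\xi$ along the truncation triangle $D^{\leq 0} \to D^{\leq n} \to \tau^{>0}D^{\leq n}$ gives $\xi^d_{D^{\leq n}} \circ \Ext^d_R(k, \iota) = \Hc^d(\iota) \circ \xi^d_{\HD}$, so you would have to show the canonical element is not killed by $\Hc^d(\iota): \Hc^d(\HD) \to \Hc^d(D^{\leq n})$. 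The kernel of that map is the image of the connecting map from $\Hc^{d-1}(\tau^{>0}D^{\leq n})$, and since $\tau^{>0}D^{\leq n}$ has cohomology in degrees $1, \ldots, n$ its local cohomology can be nonzero well beyond degree $d-1$; neither $\injdim(D) < \infty$ nor the cohomological range of the truncation quotient supplies the vanishing you would need, and you give no argument for it. The paper (following \cite[Lemma 4.8]{MCMC}) sidesteps this entirely: it chooses $\alpha \in \Hom_{\HR}(\Hc^d(\HD), \HE)$ with $\alpha \circ \xi^d_{\HD} \neq 0$ (faithful injectivity of $\HE$), uses local duality over the \emph{complete} ring $\HR$ to realize $\alpha$ as induced by a morphism $f: \HD \to D$ in $\sD(R)$, factors $f$ through $D^{\leq n}$, and then applies the naturality square to $f$ rather than to the truncation map; since $\alpha$ itself factors through $\Hc^d(f)$, the nonvanishing of $\alpha \circ \xi^d_{\HD}$ forces $\xi^d_{D^{\leq n}} \neq 0$. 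This is precisely where the completeness hypothesis enters; your proposal never invokes completeness, which is a sign the argument as sketched cannot close.

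A smaller issue concerns condition (3): you assert that ``biduality implies $\Ext^{-n}_R(D^{\leq n}, D) \neq 0$'' as the bottommost Ext of an object with $\sup = n$. Biduality does not give this; $\inf(X^\dagger) = -\sup(X)$ can fail (for instance $X = k$ has $k^\dagger \cong \Sigma^{-d}k$, so $\inf(k^\dagger) = d$). What is actually needed is $\dim_{\HR} \HH^n(D) = d$, i.e.\ that $\HH^n(D)$ has full support, which is exactly what the localization-at-minimal-primes argument under the constant amplitude hypothesis provides; the paper then concludes $\sup(\dTorsion(N)) = d$ from \cite[Thm.~2.15]{CMDGA}. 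Your phrasing for conditions (4) and (5) gestures at full support, but the justification you give for condition (3) does not correctly identify where that input is used.
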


\begin{proof}
We will use Definition~\ref{MCMD} to prove that $\RHom_R(\Sigma^n(D^{\leq n}), D)$ is a maximal Cohen-Macaulay DG-complex. Consider the DG-module $N = \Sigma^n(D^{\leq n})$. By the properties of a dualizing DG-module, $\RHom_R(N, D)^\dag = N$, and so we must verify that $N$ satisfies conditions 1-5 in Definition~\ref{MCMD}. Condition 1 is immediate. We have that $\inf(N) = -n$ and that $\amp(N) \leq n$. To show conditions 4 and 5, we must show that $\HH^0(N) \neq 0$. Note that $D_\px$ is a dualizing DG-module for $R_\px$ for any $\px \in \Spec{\HR}$ by \cite[Cor. 6.11]{Twisted}. For any prime $\px \in \text{Min}(\HR)$, the DG-ring $R_\px$ is Cohen-Macaulay, and so $\amp(D_\px) = \amp(R_\px)$. This implies that $\Supp(\HH^n(D)) = \Spec(\HR)$, and so $\HH^n(D) \neq 0$ and $\dim(\HH^n(D)) = d$. Since $\HH^n(D) \simeq \HH^0(N)$, we have that $\HH^0(N) \neq 0$. Thus, conditions 4 and 5 are satisfied. In addition, since $\dim(\HH^0(N)) = d$, the supremum of $\dTorsion(N)$ is $d$ \cite[Thm 2.15]{CMDGA}, and so condition 3 is satisfied.\\
To prove condition 2, we will argue as in the proof of \cite[Lemma 4.8]{MCMC}. By Theorem \ref{zeromap} that the following map is nonzero.
\begin{equation} \label{eq:4}
\xi_{\HD}^d: \Ext_R^d(k, \HH^0(D)) \to \Hc^d(\HH^0(D))
\end{equation}
Since $\HE$ is a faithfully injective $\HR$-module, there exists a map 
\begin{equation*}
\alpha \in \Hom_{\HR}(\Hc^d(\HD), \HE)
\end{equation*}
such that $\alpha \circ \xi_{\HD}^d \neq 0$. We have that $\HR$ is complete since $R$ is a complete DG-ring \cite[Prop. 1.7]{CMDGA}. Since $\HR$ is complete, we can consider $\alpha$ to be induced by a map \begin{equation*}
f \in \HH^0(\RHom_{\HR}(\HD, \RHom_R(\HR, D)))
\end{equation*}
by local duality. (Keep in mind that the dualizing complex for $\HR$ is $\RHom_R(\HR, D))$.) By derived Hom-tensor adjunction we have that
\begin{equation*}
\HH^0(\RHom_{\HR}(\HD, \RHom_R(\HR, D))) \cong \HH^0(\RHom_R(\HD, D)
\end{equation*}
\sloppy and so $f$ can be considered a map in $\HH^0(\RHom_R(\HD, D))$, or alternately $\Hom_{\sD(R)}(\HD, D)$. Any morphism in $\Hom_{\sD(R)}(\HD, D)$ can be considered to be a morphism in $\Hom_{\sD(R)}(\HD,\\ D^{\leq n})$ since $\sup(\HD) = 0$ and $\inf(D) = 0$. We then have the commutative diagram
\begin{equation*}
\begin{tikzcd}[column sep = large, row sep = large, font = \normalsize]
\Ext_R^d(k, \HH^0(D)) \arrow[r, "\xi_{\HD}^d"]\arrow[d, "{\Ext_R^d(k, f)}"] & \Hc^d(\HH^0(D))\arrow[rd, "\alpha"]\arrow[d, "{\Hc^d(f)}"] & \\
\Ext_R^d(k, D^{\leq n}) \arrow[r, "\xi_{D^{\leq n}}^d"] & \Hc^d(D^{\leq n}) \arrow[r] & \HE
\end{tikzcd}
\end{equation*}
Since $\alpha \circ \xi_{\HD}^d$ is nonzero, the following map is nonzero.
\begin{equation*}
\xi^d_{D^{\leq n}}: \Ext_R^d(k, D^{\leq n}) \to \Hc^d(D^{\leq n})
\end{equation*}
Rewriting $D^{\leq n}$ as $\Sigma^{-n}(N)$, we obtain the nonzero map
\begin{equation*}
\xi^{d-n}_N: \Ext_R^{d-n}(k, N) \to \Hc^d(N)
\end{equation*}
Thus, condition 2 is satisfied.
\end{proof}

As in the ring case, it is possible to extend Theorem \ref{Existence} to all DG-rings with constant amplitude admitting a dualizing DG-module. We will first prove a necessary lemma about the dualizing DG-module of the derived completion of a DG-ring.
\begin{lemma}
\label{DualizingComplete}
Let $R$ be a DG-ring with a right-normalized dualizing DG-module $D$. The DG $\dComplete(R)$-module $D \ldt_R \dComplete(R)$ is a right-normalized dualizing DG-module for $\dComplete(R)$.
\end{lemma}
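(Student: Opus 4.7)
The plan is to verify the three conditions of Definition~\ref{Dualizing} for the DG $\dComplete(R)$-module $D \ldt_R \dComplete(R)$, together with the right-normalization $\inf(D \ldt_R \dComplete(R)) = 0$. First I identify $D \ldt_R \dComplete(R) \cong \dComplete(D)$, which holds for $D \in \Dbf(R)$ by the standard results of \cite{DerCom}. Under this identification, each cohomology $\HH^i(\dComplete(D)) \cong \HH^i(D)^\wedge$ is the $\mx$-adic completion of a finitely generated $\HR$-module, hence finitely generated over $\HH^0(\dComplete(R)) \cong \HR^\wedge$ and concentrated in the same degrees as $\HH^\bullet(D)$. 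This yields $D \ldt_R \dComplete(R) \in \Dbf(\dComplete(R))$, and by faithful flatness of $\HR^\wedge$ over $\HR$, $\inf(\dComplete(D)) = \inf(D) = 0$.

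For the homothety isomorphism, I would chain together the extension-restriction adjunction along $R \to \dComplete(R)$, Greenlees--May duality \cite[Thm.~2.12]{DerCom}, Hom-tensor adjunction paired with the identity $\dTorsion(D) \cong \dTorsion(R) \ldt_R D$ (from the telescope computation of derived torsion), the homothety for $D$ over $R$, and finally Greenlees--May duality once more:
\begin{align*}
\RHom_{\dComplete(R)}(\dComplete(D), \dComplete(D))
&\cong \RHom_R(D, \dComplete(D)) \\
&\cong \RHom_R(\dTorsion(D), D) \\
&\cong \RHom_R(\dTorsion(R), \RHom_R(D, D)) \\
&\cong \RHom_R(\dTorsion(R), R) \cong \dComplete(R).
\end{align*}
Naturality of each step identifies this composite with the inverse of the canonical homothety map for $\dComplete(D)$ over $\dComplete(R)$.

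For finite injective dimension, I would take any $N \in \sD^\rb(\dComplete(R))$, view it as an object of $\sD^\rb(R)$ via restriction of scalars, and use the same adjunction and duality techniques to reduce $\RHom_{\dComplete(R)}(N, \dComplete(D))$ to $\RHom_R(N, D)$, which has amplitude controlled by $\injdim_R(D)$ and $\amp(N)$. A convenient test case is $N = k$: since $k$ is $\mx$-torsion, $k \cong \dComplete(k) \cong k \ldt_R \dComplete(R)$, so the adjunction gives $\RHom_{\dComplete(R)}(k, \dComplete(D)) \cong \RHom_R(k, \dComplete(D))$, and Greenlees--May duality then gives $\RHom_R(k, \dComplete(D)) \cong \RHom_R(\dTorsion(k), D) \cong \RHom_R(k, D)$, which is bounded.

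The main obstacle is extending this type of reduction from $N = k$ (or more generally from $N$ in the essential image of extension of scalars, i.e., $N$ already derived $\mx$-complete over $R$) to an arbitrary $N \in \sD^\rb(\dComplete(R))$. I would handle this either via a residue-field characterization of finite injective dimension from Shaul's theory of generalized injective DG-modules \cite{Injective}, reducing to the $N = k$ computation above, or by exploiting that the cohomologies of $N$ are already modules over $\HR^\wedge$ and so acquire the requisite compatibility with $\dComplete(R)$ from the noetherian completeness of $\HR^\wedge$.
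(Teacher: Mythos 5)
Your argument for the homothety condition is a genuinely different route from the paper's and appears sound: you pass through $D \ldt_R \dComplete(R) \cong \dComplete(D)$ and chain Greenlees--May duality with Hom-tensor adjunction, whereas the paper tensors the homothety isomorphism $R \to \RHom_R(D,D)$ with $\Rhat$ and invokes derived tensor evaluation, justified by the finite flat dimension of $\Rhat$ over $R$. The paper's version has the advantage of visibly tracking the actual homothety morphism; in your version the burden of showing that the composite of abstract isomorphisms agrees with (the image of) the canonical homothety map is deferred to ``naturality of each step,'' which is believable but is precisely the part that needs care. Your verification of finiteness of cohomology and of $\inf(D \ldt_R \dComplete(R)) = 0$ via faithful flatness of $\widehat{\HR}$ matches what the paper does (the paper cites $\inf(D \ldt_R \Rhat) = \inf(D)$ directly).

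The genuine gap is in the finite injective dimension condition, and you have identified it yourself: your adjunction/duality reduction only handles test objects $N$ in the essential image of extension of scalars (such as $k$), not arbitrary $N \in \sD^{\rb}(\dComplete(R))$, and neither of your two proposed fixes is carried out. The residue-field characterization you hope for --- that boundedness of $\RHom_{\Rhat}(k, M)$ detects $\injdim_{\Rhat}(M) < \infty$ --- is a classical fact over noetherian local rings but is not something you can simply assert for DG-rings without a citation or proof, and your second suggestion (that the cohomologies of $N$ being $\widehat{\HR}$-modules supplies ``the requisite compatibility'') is not an argument. The paper closes exactly this gap with a different reduction: Shaul's result that $\injdim_{\Rhat}(M) < \infty$ if and only if $\injdim_{\HH^0(\Rhat)}(\RHom_{\Rhat}(\HH^0(\Rhat), M)) < \infty$, which transports the question to the ordinary noetherian local ring $\widehat{\HR}$, where it is settled by identifying $\RHom_{\Rhat}(\widehat{\HR}, D \ldt_R \Rhat)$ with $\RHom_R(\HR, D) \otimes_{\HR} \widehat{\HR}$ and applying the classical invariance of injective dimension under faithfully flat base change. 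Without this (or a proved DG-analogue of the Bass/Chouinard-type characterization you allude to), your proof of condition (3) of Definition~\ref{Dualizing} is incomplete.
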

\begin{proof}
Throughout this proof, we will denote $\dComplete(R)$ as $\Rhat$.

We will first prove that the DG $\Rhat$-module $D \ldt_R \Rhat$ is a right-normalized dualizing DG-module for $\Rhat$. Since $D$ is a dualizing DG $R$-module, the homothety morphism $R \to \RHom_R(D, D)$ is an isomorphism. Tensoring with $\Rhat$ (after potentially replacing $\Rhat$ with a quasi-isomorphic DG-ring to guarantee a map from $R$ to $\Rhat$) gives the isomorphism $\Rhat \to \RHom_R(D, D)\\ \ldt_R \Rhat$ of DG $\Rhat$-modules. Since $\Rhat$ has finite flat dimension with respect to $R$ \cite[Cor. 4.6]{HomDim}, derived tensor evaluation can be used in this situation. We have the following isomorphisms
\begin{align*}
	\Rhat 	&\simeq \RHom_R(D, D) \ldt_R \Rhat \\
	&\simeq \RHom_R(D, D \ldt_R \Rhat) \\
	&\simeq \RHom_R(D, \RHom_{\Rhat}(\Rhat, D\ldt_R \Rhat)) \\
	&\simeq \RHom_{\Rhat}(D \ldt_R \Rhat, D \ldt_R \Rhat)
\end{align*}
where the second isomorphism is from derived tensor evaluation \cite[Prop. 14.3.19]{DC}, and the fourth is derived Hom-tensor evaluation. Thus, the homothety morphism $\Rhat \to \RHom_{\Rhat}(D \ldt_R \Rhat, D \ldt_R \Rhat)$ is an isomorphism.

We will now prove that $\injdim_{\Rhat}(D \ldt_R \Rhat) < \infty$. Since $D$ is a dualizing DG-module for $R$ by assumption, we have that $\injdim_R(D) < \infty$. By \cite[Prop. 2.5]{Injective}, this implies that \\$\injdim_{\HR}(\RHom_R(\HR, D)) < \infty$. Since $\HR$ is a noetherian local ring, and $\widehat{\HR}$ is a faithfully flat $\HR$-module, we have 
\begin{equation*}
\injdim_{\HR}(\RHom_R(\HR, D) \otimes_{\HR} \widehat{\HR}) = \injdim_{\HR}(\RHom_R(\HR, D))
\end{equation*}
by \cite[Thm 15.4.35]{DCMCA}. We have $\HR \ldt_R \Rhat \simeq \widehat{\HR}$ by \cite[Lemma 4.6]{Min2}, and so we have the following set of isomorphisms in $\sD(\HR)$
\begin{align*}
\RHom_R(\HR, D) \otimes_{\HR} \widehat{\HR} 
	&\simeq \RHom_R(\HR, D) \otimes_{\HR} (\HR \ldt_R \Rhat) \\
	&\simeq \RHom_R(\HR, D) \ldt_R \Rhat \\
	&\simeq \RHom_R(\HR, D \ldt_R \Rhat)
\end{align*}
by \cite[Thm 14.3.19]{DC}. By \cite[Thm 17.3.16]{DCMCA}, we have
\begin{equation*}
\injdim_{\HR}(\RHom_R(\HR, D \ldt_R \Rhat)) =\\ \injdim_{\widehat{\HR}}(\RHom_{\HR}(\widehat{\HR}, \RHom_R(\HR, D \ldt_R \Rhat)))
\end{equation*}
We then have the final set of equalities
{\allowdisplaybreaks
\begin{align*}
\injdim_{\HR}(\RHom_R(\HR, D)) 	&=\injdim_{\HR}(\RHom_R(\HR, D \ldt_R \Rhat)) \\
								&=\injdim_{\widehat{\HR}}(\RHom_{\HR}(\widehat{\HR}, \RHom_R(\HR, D \ldt_R \Rhat)))\\
								&=\injdim_{\widehat{\HR}}(\RHom_{R}(\widehat{\HR}, D \ldt_R \Rhat))\\
								&=\injdim_{\widehat{\HR}}(\RHom_{\Rhat}(\widehat{\HR}, D \ldt_R \Rhat))
\end{align*}
}which holds through derived Hom-tensor adjunction. The injective dimension of\\ $\RHom_{\Rhat}(\widehat{\HR}, D \ldt_R \Rhat)$ must then be finite, and so $\injdim_{\Rhat}(D \ldt_R \Rhat) < \infty$ by \cite[Prop. 2.5]{Injective}. (Keep in mind that $\widehat{\HR} \simeq \HH^0(\Rhat)$ by \cite[Prop. 1.7]{CMDGA}.)

Thus, the DG $\Rhat$-module $D \ldt_R \Rhat$ is dualizing, and since $\inf(D \ldt_R \Rhat) = \inf(D)$ by \cite[Lemma. 4.8]{FinDim}, $D \ldt_R \Rhat$ is a right-normalized dualizing DG $\Rhat$-module. 	
\end{proof}

We can now prove the existence of maximal Cohen-Macaulay DG-complexes over DG-rings of constant amplitude admitting a dualizing DG-module.

\begin{theorem}[Existence of Maximal Cohen-Macaulay DG-Complexes]
\label{ExistenceGeneral}
Let $R$ be a DG-ring with constant amplitude and $\amp(R) = n$. If $D$ is a right-normalized dualizing DG-module, then the DG-module $\RHom_R(\Sigma^n(D^{\leq n}), D)$ is a maximal Cohen-Macaulay DG-complex.
\end{theorem}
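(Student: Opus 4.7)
The plan is to reduce to the complete case handled by Theorem~\ref{Existence} via derived completion. Set $\Rhat = \dComplete(R)$. By Lemma~\ref{DualizingComplete}, $\widehat{D} := D \ldt_R \Rhat$ is a right-normalized dualizing DG-module for $\Rhat$. Since $\widehat{\HR}$ is faithfully flat over $\HR$ and each $\HH^i(R)$ is finitely generated, a standard argument yields $\HH^i(\Rhat) \cong \HH^i(R) \otimes_{\HR} \widehat{\HR}$ for every $i$; in particular $\amp(\Rhat) = n$ and the constant amplitude of $R$ is inherited by $\Rhat$. Theorem~\ref{Existence} therefore applies to $(\Rhat, \widehat{D})$, showing that $\widehat{M} := \RHom_{\Rhat}(\Sigma^n(\widehat{D}^{\leq n}), \widehat{D})$ is a maximal Cohen-Macaulay DG-complex over $\Rhat$.

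Now set $M = \RHom_R(\Sigma^n(D^{\leq n}), D)$ and $N = M^\dagger = \Sigma^n(D^{\leq n})$ (via biduality). I would verify conditions (1)--(5) of Definition~\ref{MCMD} for $N$ over $R$. Conditions (1), (3), (4), and (5) are verified exactly as in the proof of Theorem~\ref{Existence}: those steps used only the constant amplitude hypothesis (to deduce $\HH^n(D) \neq 0$ and $\dim \HH^n(D) = d$) and did not use completeness. So only condition~(2) requires new input.

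For condition (2), the goal is to show that $\xi^{d-n}_N \colon \Ext^{d-n}_R(k, N) \to \Hc^{d-n}(N)$ is nonzero, and I would reduce this to the analogous map for $\widehat{N} := N \ldt_R \Rhat$ over $\Rhat$ through three identifications. First, by the cohomological flatness of $R \to \Rhat$ noted above, smart truncation commutes with $-\ldt_R \Rhat$ on bounded objects, so $\widehat{N} \cong \Sigma^n(\widehat{D}^{\leq n})$. Second, Greenless-May duality combined with $\dTorsion(k) \cong k$ gives $\RHom_R(k, N) \cong \RHom_R(k, \dComplete(N))$; coupling this with the identification $\dComplete(N) \cong \widehat{N}$ (valid for $N \in \Dbf(R)$), Hom-tensor adjunction, and $k \ldt_R \Rhat \cong k$, yields $\Ext^{d-n}_R(k, N) \cong \Ext^{d-n}_{\Rhat}(k, \widehat{N})$. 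Third, Lemma~\ref{IndBase} applied to $R \to \Rhat$, combined with $\dTorsion(N) \cong \dTorsion(\dComplete(N)) \cong \dTorsion(\widehat{N})$ from MGM equivalence, produces $\Hc[\mx R]^{d-n}(N) \cong \Hc[\mx \Rhat]^{d-n}(\widehat{N})$.

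The main obstacle is to verify that these three identifications intertwine $\xi^{d-n}_N$ with $\xi^{d-n}_{\widehat{N}}$, i.e., that they assemble into a commutative square relating the two canonical maps. This should follow from the naturality of Greenless-May duality and of the derived Hom-tensor adjunctions involved, but care is needed, since $\xi^{d-n}$ is defined by a specific natural transformation and not merely by its source and target. Once this compatibility is established, Theorem~\ref{Existence} applied to $\Rhat$ ensures $\xi^{d-n}_{\widehat{N}} \neq 0$, and hence $\xi^{d-n}_N \neq 0$. This completes condition (2), and therefore the verification that $M$ is a maximal Cohen-Macaulay DG-complex over $R$.
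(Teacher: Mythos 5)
Your proposal is correct and follows essentially the same route as the paper: reduce to the complete case via Lemma~\ref{DualizingComplete} and Theorem~\ref{Existence}, show that smart truncation commutes with $-\ldt_R \Rhat$, and transfer the nonvanishing of the canonical map back to $R$. The only (minor) divergence is in the last step, where the paper identifies the completed map with $\xi^d_{D^{\leq n}} \otimes_{\HR} \widehat{\HR}$ and invokes faithful flatness of $\widehat{\HR}$, whereas you build direct isomorphisms of source and target via Greenless--May/MGM; both work, and the naturality check you flag is present (implicitly) in the paper's version as well.
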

\begin{proof}
As in the previous proof, we will use Definition \ref{MCMD} to prove that $\RHom_R(\Sigma^n(D^{\leq n}), D)$ is a maximal Cohen-Macaulay DG-complex. By the same reasoning given in Theorem \ref{Existence}, conditions 1, 3, 4, and 5 are satisfied. The only non-trivial condition is showing that the map
\begin{equation*}
\xi^d_{D^{\leq n}}: \Ext^d_R(k, D^{\leq n}) \to \Hc^d(D^{\leq n})
\end{equation*}
is nonzero. We will prove this by passing to the DG-ring $\dComplete(R)$. For the rest of the proof, we will denote $\dComplete(R)$ as $\Rhat$.\\

By \cite[Cor. 5.14]{DerCom}, \cite[Lemma 1.7]{CMDGA}, and \cite[Lemma 4.1]{Koszul}, $\Rhat$ is a complete  DG-ring with constant amplitude such that $\amp(\Rhat) = \amp(R)$. In addition, we have that $D \ldt_R \Rhat$ is a right-normalized dualizing DG-module for $\Rhat$ by Lemma \ref{DualizingComplete}. Thus, by Theorem \ref{Existence}, we have that the map
\begin{equation*}\label{eq:5}
\Ext_{\Rhat}^d(k, (D \ldt_R \Rhat)^{\leq n}) \to \Hc[\widehat{\mx}]^d((D \ldt_R \Rhat)^{\leq n})
\end{equation*}
is nonzero.

We now claim that $(D \ldt_R \Rhat)^{\leq n} \simeq D^{\leq n} \ldt_R \Rhat$. To show this, consider the distinguished triangle
\begin{equation*}
\begin{tikzcd}[column sep = small]
D^{\leq n} \arrow[r] & D \arrow[r] & D^{> n} \arrow[r] & \phantom{}
\end{tikzcd}
\end{equation*}
Applying first the triangulated functor $- \ldt_R \Rhat$ and then the triangulated functor $(-)^{\leq n}$ yields the distinguished triangle
\begin{equation*}
\begin{tikzcd}[column sep = small]
(D^{\leq n} \ldt_R \Rhat)^{\leq n} \arrow[r] & (D \ldt_R \Rhat)^{\leq n} \arrow[r] & (D^{> n} \ldt_R \Rhat)^{\leq n} \arrow[r] & \phantom{}
\end{tikzcd}
\end{equation*}
By \cite[Lemma 4.6]{Min2}, we have that $(D^{> n} \ldt_R \Rhat)^{\leq n} = 0$, and so $(D^{\leq n} \ldt_R \Rhat)^{\leq n} \simeq (D \ldt_R \Rhat)^{\leq n}$.
Again by Lemma 4.6, we have that $\HH^i((D^{\leq n} \ldt_R \Rhat)^{\leq n}) = 0$ for all $i > n$, and so $(D^{\leq n} \ldt_R \Rhat)^{\leq n} \simeq D^{\leq n} \ldt_R \Rhat$. This establishes that 
\begin{equation*}
(D \ldt_R \Rhat)^{\leq n} \simeq D^{\leq n} \ldt_R \Rhat
\end{equation*}

We now have that Equation \ref{eq:5} can be written as
\begin{equation*}
	\Ext_{\Rhat}^d(k, D^{\leq n} \ldt_R \Rhat) \to \Hc[\widehat{\mx}]^d(D^{\leq n} \ldt_R \Rhat)
\end{equation*}
Using the definitions of Ext and local cohomology, we can view the above map as
\begin{equation*}
	\HH^d(\RHom_{\Rhat}(k, D^{\leq n} \ldt_R \Rhat)) \to \HH^d(\dTorsion[\widehat{\mx}](D^{\leq n} \ldt_R \Rhat))
\end{equation*}
We have that 
\begin{align*}
\RHom_{\Rhat}(k, D^{\leq n} \ldt_R \Rhat) 	
	&\simeq \RHom_R(k, D^{\leq n} \ldt_R \Rhat) \\
	&\simeq \RHom_R(k, D^{\leq n}) \ldt_R \Rhat
\end{align*}
through derived Hom-tensor adjunction and derived tensor evaluation. Similarly, we have 
\begin{align*}
\dTorsion[\widehat{\mx}](D^{\leq n} \ldt_R \Rhat)
	&\simeq \dTorsion(D^{\leq n} \ldt_R \Rhat) \\
	&\simeq \dTorsion(D^{\leq n}) \ldt_R \Rhat	  									
\end{align*}
by Lemma \ref{IndBase} and because derived torsion commutes with the derived tensor. Thus, the map
\begin{equation*}
	\HH^d(\RHom_{\Rhat}(k, D^{\leq n} \ldt_R \Rhat)) \to \HH^d(\dTorsion[\widehat{\mx}](D^{\leq n} \ldt_R \Rhat))
\end{equation*}
is equivalent to
\begin{equation*}
	\HH^d(\RHom_R(k, D^{\leq n}) \ldt_R \Rhat) \to \HH^d(\dTorsion(D^{\leq n}) \ldt_R \Rhat)
\end{equation*}
By \cite[Lemma 4.6]{Min2}, the above map equals
\begin{equation*}
	\HH^d(\RHom_R(k, D^{\leq n})) \otimes_{\HR} \widehat{\HR} \to \HH^d(\dTorsion(D^{\leq n})) \otimes_{\HR} \widehat{\HR}
\end{equation*}
Since this map is nonzero, we have that the map
\begin{equation*}
	\HH^d(\RHom_R(k, D^{\leq n})) \to \HH^d(\dTorsion(D^{\leq n}))
\end{equation*}
is nonzero by the faithful flatness of $\widehat{\HR}$. Rewriting this map with Ext and local cohomology, we have that
\begin{equation*}
	\Ext^d_R(k, D^{\leq n}) \to \Hc^d(D^{\leq n})
\end{equation*}
is nonzero, which completes the proof.
\end{proof}

As a result of Theorem \ref{ExistenceGeneral}, all DG-rings with constant amplitude admitting a dualizing DG-module have a maximal Cohen-Macaulay DG-complex. Since any maximal Cohen-Macaulay DG-complex is a DG-module of maximal depth, such DG-rings satisfy the Derived Improved New Intersection Theorem. However, the validity of the Derived Improved New Intersection Theorem can be easily extended to all DG-rings with constant amplitude by passing to the completion.

\begin{theorem}[The Derived Improved New Intersection Theorem] \label{INITGeneral}
Let $R$ be a DG-ring with $\amp(R) = n$ with constant amplitude (i.e., $\Supp(\HH^{\inf(R)}(R)) = \Spec(\HR)$), and $F \in \Dbf(R)$ a semi-free DG-module where $\HH^0(F) \neq 0$ and $\HH^i(F)$ is of finite length for $i \leq -1$. If an ideal $I$ in $\HR$ annihilates a minimal generator of $\HH^0(F)$, then $\projdim(F)+n \geq \dim \HR - \dim(\HR/I)$.
\end{theorem}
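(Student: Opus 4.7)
The plan is to reduce to Theorem \ref{INIT} by base-changing to the derived completion $\Rhat := \dComplete(R)$. As established in the proof of Theorem \ref{ExistenceGeneral}, $\Rhat$ is a complete DG-ring of constant amplitude with $\amp(\Rhat) = n$. Assuming $\Rhat$ admits a right-normalized dualizing DG-module---which in principle follows from the author's extension of J\o rgensen's theorem recalled in the background, since $\widehat{\HR}$ is complete and hence a homomorphic image of a regular local ring---Theorem \ref{ExistenceGeneral} produces a maximal Cohen-Macaulay DG-complex for $\Rhat$, and in particular a DG-module of maximal depth over $\Rhat$.

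Next I would verify the hypotheses of Theorem \ref{INIT} for the triple $(\Rhat, \widehat{F}, \widehat{I})$, where $\widehat{F} := F \ldt_R \Rhat$ and $\widehat{I} := I\widehat{\HR}$. Since $F$ is a bounded semi-free $R$-module, $\widehat{F}$ is a bounded semi-free $\Rhat$-module. Using $\HR \ldt_R \Rhat \cong \widehat{\HR}$ \cite[Lemma 4.6]{Min2} and the faithful flatness of $\widehat{\HR}$ over $\HR$, one has $\HH^0(\widehat{F}) \cong \HH^0(F) \otimes_{\HR} \widehat{\HR} \neq 0$, and the image of a minimal generator of $\HH^0(F)$ annihilated by $I$ remains a minimal generator of $\HH^0(\widehat{F})$ annihilated by $\widehat{I}$, by Nakayama over $\widehat{\HR}$ together with $\widehat{\HR}/\widehat{\mx} = k$. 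For $i \leq -1$, $\HH^i(\widehat{F}) \cong \HH^i(F) \otimes_{\HR} \widehat{\HR} \cong \HH^i(F)$ is still of finite length, since finite length $\HR$-modules are unchanged by completion.

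Applying Theorem \ref{INIT} to $\Rhat$ then yields
\[
\projdim_{\Rhat}(\widehat{F}) + n \geq \dim \widehat{\HR} - \dim(\widehat{\HR}/\widehat{I}).
\]
Faithful flatness gives $\dim \widehat{\HR} = \dim \HR$ and $\dim(\widehat{\HR}/\widehat{I}) = \dim(\HR/I)$. If $P \to F$ is a bounded semi-projective resolution realizing $\projdim_R(F)$ (if this is infinite the claim is vacuous), then $P \ldt_R \Rhat$ is a bounded semi-projective resolution of $\widehat{F}$ of length at most $\projdim_R(F)$, whence $\projdim_{\Rhat}(\widehat{F}) \leq \projdim_R(F)$, and the desired inequality follows.

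The principal obstacle is the first paragraph's assertion that the completion $\Rhat$ admits a dualizing DG-module; if this cannot be invoked directly from the literature, one must construct it explicitly from the dualizing complex of $\widehat{\HR}$ (using Cohen's structure theorem) and the structural properties of $\Rhat$, perhaps by exhibiting a surjection onto $\Rhat$ from a Gorenstein DG-ring constructed from a Cohen presentation of $\widehat{\HR}$. Once this existence is granted, the rest of the argument is a routine base-change computation.
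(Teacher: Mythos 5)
Your proposal is correct and follows essentially the same route as the paper: base change to $\Rhat = \dComplete(R)$, invoke the existence of a maximal Cohen--Macaulay DG-complex over $\Rhat$, apply Theorem~\ref{INIT} there, and transfer the inequality back using faithful flatness and the behavior of projective dimension under completion. The one point you flag as a potential obstacle --- that $\Rhat$ admits a dualizing DG-module --- is not an issue: the paper handles it by citing \cite[Prop.~7.21]{Injective}, which establishes exactly this for derived completions.
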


\begin{proof}
The DG-ring $\dComplete(R)$ has constant amplitude, has $\amp(\Rhat) = n$, and admits a dualizing DG-module \cite[Lemma 4.1]{Koszul} \cite[Prop. 7.21]{Injective}. Thus, by Theorem \ref{Existence}, $\dComplete(R)$ has a maximal Cohen-Macaulay DG-complex. In addition, the DG $\Rhat$-module $F \ldt_R \Rhat$ is a semi-free DG $\Rhat$-module, with $\HH^i(F \ldt_R \Rhat) = \widehat{\HH^i(F)}$. The ideal $\widehat{I}$ in $\widehat{\HR}$ annihilates a minimal generator of $\HH^0(F \ldt_R \Rhat)$ by the exactness of completion. Thus, by Theorem \ref{INIT}, we have that
\begin{equation*}
	\projdim_{\Rhat}(F \ldt_R \Rhat)+n \geq \dim \widehat{\HR} - \dim(\widehat{\HR/I})
\end{equation*}
But $\dim(\widehat{\HR}) = \dim(\HR)$, $\dim(\widehat{\HR/I}) = \dim(\HR/I)$, and $\projdim_{\Rhat}(F \ldt_R \Rhat) = \projdim_R(F)$ (the last fact is due to \cite[Lemma 4.9]{FinDim}). Thus, we have
\begin{equation*}
	\projdim_{R}(F)+n \geq \dim \HR - \dim(\HR/I)
\end{equation*}
which proves the theorem.
\end{proof}

Thus, the Derived Improved New Intersection Theorem holds for all DG-rings of constant amplitude, regardless of whether they admit a dualizing DG-module.

It is natural to wonder if it is possible to lift the assumption that the DG-rings in question have constant amplitude. Since the proofs of the Derived Improved New Intersection Theorem and the existence of maximal Cohen-Macaulay DG-complexes both rely on localization, it does not seem likely to the author that the assumption can be dropped.

\subsection*{Acknowledgments}
The author wishes to thank his thesis advisor, Tom Marley, for both suggesting the topic behind this paper, and for the many discussions and insights that shaped this paper. He also wishes to thank the anonymous referee for comments and suggestions that improved this paper.

\bibliographystyle{plain}
\bibliography{MCM_DG-Complexes}

\end{document}